
\documentclass[10pt,fleqn]{amsart}

\usepackage{amsmath,amssymb,latexsym}
\usepackage[mathscr]{eucal}



\usepackage{cite}

\theoremstyle{plain}
\newtheorem{theorem}{Theorem}

\numberwithin{equation}{section}

\raggedbottom





\newcommand{\dy}{\partial}
\newcommand{\ddt}[1]{\frac{\mathrm{d}{#1}}{\mathrm{d}{t}}}

\newcommand{\sfrac}[2]{{\textstyle\frac{#1}{#2}}}
\newcommand{\tssum}{{\textstyle\sum}}
\newcommand{\nlsum}{{\sum\nolimits}}
\newcommand{\tssup}{{\textstyle\sup}}

\newcommand{\ex}{\mathrm{e}}

\newcommand{\eps}{\varepsilon}

\newcommand{\ncdot}{\!\cdot\!}

\newcommand{\gb}{\nabla}

\newcommand{\nk}[1]{{\lfloor #1\rfloor}}
\newcommand{\Nk}[1]{{\lceil #1\rceil}}

\newcommand{\aand}{\quad\textrm{and}\quad}

\newcommand{\circled}[1]{\textrm{\textcircled{\raisebox{-0.1ex}{\footnotesize{#1}}}}}


\newcommand{\Dom}{\mathscr{D}}

\newcommand{\ff}{Q}
\newcommand{\parm}{\pi}

\newcommand{\gn}[1]{|\![{#1}]\!|}

\newcommand{\cpoi}{c_0^{}}

\newcommand{\vb}{\boldsymbol{v}}
\newcommand{\xb}{\boldsymbol{x}}

\newcommand{\psih}{\hat\psi}
\newcommand{\wh}{\hat\omega}
\newcommand{\zh}{\boldsymbol{e}_z}

\newcommand{\ecT}{\alpha_T}
\newcommand{\ecS}{\alpha_S}

\newcommand{\Sh}{\hat S}
\newcommand{\Th}{\hat T}
\newcommand{\Uh}{\hat U}

\newcommand{\dt}{\;\mathrm{d}t}
\newcommand{\dx}{\;\mathrm{d}x}

\newcommand{\kv}{\kappa_v}
\newcommand{\kt}{\kappa_T}
\newcommand{\ks}{\kappa_S}

\newcommand{\flt}{Q_T}
\newcommand{\fls}{Q_S}
\newcommand{\flu}{Q_u}
\newcommand{\fla}{Q}

\newcommand{\tq}{T_Q}
\newcommand{\sq}{S_Q}

\newcommand{\ppr}{\mathfrak{p}}
\newcommand{\pts}{\beta}

\newcommand{\xnd}{\xi}

\newcommand{\Attr}{\mathcal{A}}
\newcommand{\dds}{\mathbb{S}}

\newcommand{\mw}{M_\omega}
\newcommand{\mt}{\tilde M}

\newcommand{\LIM}{\mathop{\hbox{\sc lim}}}
\newcommand{\dmu}{\;\mathrm{d}\mu}


\begin{document}

\title[2d double-diffusive convection]%
{Long-time dynamics of 2d double-diffusive convection: analysis and/of numerics}

\author[Tone]{Florentina~Tone}
\email{ftone@uwf.edu}
\address[FT]{Department of Mathematics and Statistics\\
  University of West Florida\\
  Pensacola, FL~32514, United States}

\author[Wang]{Xiaoming~Wang}
\email{wxm@mail.math.fsu.edu}
\urladdr{http://www.math.fsu.edu/\~{}wxm}
\address[XW]{Department of Mathematics\\
   Florida State University\\
   Tallahassee, FL 32306--4510, United States}

\author[Wirosoetisno]{Djoko~Wirosoetisno}
\email{djoko.wirosoetisno@durham.ac.uk}
\urladdr{http://www.maths.dur.ac.uk/\~{}dma0dw}
\address[DW]{Mathematical Sciences\\
   Durham University,
   Durham\ \ DH1~3LE, United Kingdom}

\thanks{Wang's work is supported in part by grants from NSF and a planning
grant from FSU}

\keywords{multistep scheme, double-diffusive convection, long-time stability}
\subjclass[2010]{Primary:
65M12, 
35B35, 
35K45} 

\begin{abstract}
We consider a two-dimensional model of double-diffusive convection and
its time discretisation using a second-order scheme which treat the
nonlinear term explicitly (backward differentiation formula with
a one-leg method).
Uniform bounds on the solutions of both the continuous and discrete
models are derived (under a timestep restriction for the discrete
model), proving the existence of attractors and invariant measures
supported on them.
As a consequence, the convergence of the
attractors and long time statistical properties
of the discrete model to those of the continuous one in
the limit of vanishing timestep can be obtained following established methods.
\end{abstract}

\maketitle


\section{Introduction}\label{s:intro}

The phenomenon of double-diffusive convection, in which two properties
of a fluid are transported by the same velocity field but diffused at
different rates, often occur in nature \cite{huppert-turner:81}.
Perhaps the best known example is the transport throughout the world's
oceans of heat and salinity, which has been recognised as an essential
part of climate dynamics \cite{schmitt:94,weaver-bitz-fanning-holland:99}.
In contrast to simple convections (cf.\ \cite{chandrasekhar:hdhms}),
double-diffusive convections support a richer set of physical regimes,
e.g., a stably stratified initial state rendered unstable by diffusive effects.
Although in this paper we shall be referring to the
oceanographic case, the mathematical theory is essentially identical
for astrophysical \cite{spiegel:69,mirouh-al:12}
and industrial \cite{chen-johnson:84} applications.


In this paper, we consider a two-dimensional double-diffusive convection
model, which by now-standard techniques \cite{temam:iddsmp} can be proved
to have a global attractor and invariant measures supported on it,
and its temporal discretisation.
We use a backward differentiation formula for the time derivative
and a fully explicit one-leg method \cite{hairer-wanner:sode2}
for the nonlinearities,
resulting in an accurate and efficient numerical scheme.
Of central interest, here and in many practical applications, is the
ability of the discretised model to capture long-time behaviours of the
underlying PDE.
This motivates the main aim of this article: to obtain bounds necessary
for the convergence of the attractor and associated invariant measures
of the discretised system to those of the continuous system.
We do this using the framework laid down in \cite{wxm:10,wxm:12}, with
necessary modifications for our more complex model.

For motivational concreteness, one could think of our system as a model
for the zonally-averaged thermohaline circulation in the world's oceans.
Here the physical axes correspond to latitude and altitude,
and the fluid is sea water whose internal motion is largely driven by
density differentials generated by the temperature $T$ and salinity $S$,
as well as by direct wind forcing on the surface.
Both $T$ and $S$ are also driven from the boundary---%
by precipitation/evaporation and ice melting/formation for the salinity,
and by the associated latent heat release and direct heating/cooling for
the temperature.
Physically, one expects the boundary forcing for $T$, $S$ and the momentum
to have zonal (latitude-dependent) structure, so we include these in our model.
Furthermore, one may also wish to impose a quasi-periodic time dependence
on the forcing;
although this is eminently possible, we do not do so in this paper to avoid
technicalities arising from time-dependent attractors.

\medskip
Taking as our domain $\Dom_*=[0,L_*]\times[0,H_*]$ which is periodic
in the horizontal direction, we consider a temperature field $T_*$
and a salinity field $S_*$, both transported by a velocity field
$\vb_*=(u_*,w_*)$ which is incompressible, $\gb_*\ncdot\vb_*=0$,
and diffused at rates $\kt$ and $\ks$, respectively,
\begin{equation}\begin{aligned}
   &\dy T_*/\dy t_* + \vb_*\ncdot\gb_* T_* = \kt\Delta_* T_*\\
   &\dy S_*/\dy t_* + \vb_*\ncdot\gb_* S_* = \ks\Delta_* S_*.
\end{aligned}\end{equation}
Here the star${}_*$ denotes dimensional variables.
Taking the Boussinesq approximation and assuming that the density
is a linear function of $T_*$ and $S_*$, which is a good approximation for
sea water (although not for fresh water near the freezing point),
the velocity field evolves according to
\begin{equation}
   \dy\vb_*/\dy t_* + \vb_*\ncdot\gb_*\vb_* + \gb_* p_* = \kv\Delta_*\vb_* + (\ecT T_* - \ecS S_*)\zh
\end{equation}
for some positive constants $\ecT$ and $\ecS$.

Our system is driven from the boundary by the heat and salinity fluxes
(which could be seen to arise from direct contact with air and latent
heat release in the case of heat, and from precipitation, evaporation
and ice formation/melt in the case of salinity),
\begin{equation}
   \dy T_*/\dy n_* = \flt{}_*
   \aand
   \dy S_*/\dy n_* = \fls{}_*.
\end{equation}
Here $n_*$ denotes the outward normal, $n_*=z_*$ at the top boundary
and $n_*=-z_*$ at the bottom boundary.
We also prescribe a wind-stress forcing,
\begin{equation}
   \dy u_*/\dy n_* = \flu{}_*
\end{equation}
along with the usual no-flux condition $w_*=0$ on $z_*=0$ and $z_*=H_*$.

\medskip
Largely following standard practice, we cast our system in non-dimensional
form as follows.
Using the scales $\tilde t$, $\tilde l$, $\tilde T$ and $\tilde S$,
we define the non-dimensional variables $t=t_*/\tilde t$, $\xb=\xb_*/\tilde l$,
$\vb=\vb_*\tilde t/\tilde l$, $T=T_*/\tilde T$ and $S=S_*/\tilde S$,
in terms of which our system reads
\begin{equation}\label{q:dUdt}\begin{aligned}
   \ppr^{-1}\bigl(&\dy_t\vb + \vb\ncdot\gb\vb\bigr)
	= - \gb p + \Delta\vb + (T-S)\zh\\
   &\dy_t T + \vb\ncdot\gb T = \Delta T\\
   &\dy_t S + \vb\ncdot\gb S = \pts\Delta S.
\end{aligned}\end{equation}
To arrive at this, we have put $\tilde l=H_*$ and taken the thermal
diffusive timescale for
\begin{equation}
   \tilde t = \tilde l^2/\kt,
\end{equation}
as well as scaled the dependent variables as
\begin{equation}
   \tilde T = \ppr\tilde l/(\ecT\tilde t^2)
   \aand
   \quad \tilde S = \ppr\tilde l/(\ecS\tilde t^2),
\end{equation}
where the non-dimensional {\em Prandtl number\/} and {\em diffusivity ratio\/}
(also known as the {\em Lewis number\/} in the engineering literature) are
\begin{equation}
   \ppr = \kv/\kt
   \aand
   \pts = \kt/\ks.
\end{equation}
Another non-dimensional quantity is the domain aspect ratio $\xnd=L_*/\tilde l$.
The surface fluxes are non-dimensionalised in the natural way:
$\flt=\ppr\flt{}_*/(\ecT\tilde t^2)$, $\fls=\ppr\fls{}_*/(\ecS\tilde t^2)$
and $\flu=\flu{}_*\tilde t$.

For clarity and convenience, keeping in mind the oceanographic application,
we assume that the fluxes vanish on the bottom boundary $z=0$,
\begin{equation}\label{q:BC0}
   \flu(x,0) = \flt(x,0) = \fls(x,0) = 0.
\end{equation}
For boundedness of the solution in time, the net fluxes must vanish, so
\eqref{q:BC0} then implies that the net fluxes vanish on the top boundary
$z=1$,
\begin{equation}\label{q:noflux}
   \int_0^\xnd \flu(x,1) \dx =
   \int_0^\xnd \flt(x,1) \dx =
   \int_0^\xnd \fls(x,1) \dx = 0.
\end{equation}
These boundary conditions can be seen to imply that the horizontal velocity
flux is constant in time, which we take to be zero, viz.,
\begin{equation}\label{q:uflux}
   \int_0^1 u(x,z,t) \;\mathrm{d}z
   = \int_0^1 u(x,z,0) \;\mathrm{d}z \equiv 0
   \quad\textrm{for all }x\in[0,\xnd].
\end{equation}
For some applications (e.g., the classical Rayleigh--B{\'e}nard problem),
the fluxes on the bottom boundary may not vanish, which must then be
balanced by the fluxes on the top boundary,
\begin{equation}
   \int_0^\xnd [\flt(x,1)-\flt(x,0)] \dx = 0
\end{equation}
and similarly for $\flu$ and $\fls$.
With some modifications (by subtracting background profiles from
$u$, $T$ and $S$), the analysis of this paper also apply to this
more general case.
This involves minimal conceptual difficulty but adds to the clutter,
so we do not treat this explicitly here.

Defining the vorticity $\omega:=\dy_xw-\dy_zu$, the streamfunction
$\psi$ by $\Delta\psi=\omega$ with $\psi=0$ on $\dy\Dom$
(this is consistent with \eqref{q:uflux}),
and the Jacobian $\dy(f,g):=\dy_xf\dy_zg-\dy_xg\dy_zf=-\dy(g,f)$,
our system reads
\begin{equation}\label{q:dudt}\begin{aligned}
   &\ppr^{-1}\bigl\{\dy_t\omega + \dy(\psi,\omega)\bigr\} = \Delta\omega + \dy_xT - \dy_xS\\
   &\dy_t T + \dy(\psi,T) = \Delta T\\
   &\dy_t S + \dy(\psi,S) = \pts\Delta S.
\end{aligned}\end{equation}
The boundary conditions are,
\begin{equation}\label{q:BC}
   \dy_z T = \flt,\quad
   \dy_z S = \fls,\quad
   \omega = \flu
   \aand
   \psi = 0
   \quad\textrm{on }\dy\Dom.
\end{equation}
In the rest of this paper, we will be working with \eqref{q:dudt}--\eqref{q:BC}
and its discretisation.
We assume that $\omega$, $T$ and $S$ all have zero integral over $\Dom$
at $t=0$.
Thanks to the no-net-flux condition \eqref{q:noflux},
this persists for all $t\ge0$.

Another dimensionless parameter often considered in studies of
(single-species) convection is the {\em Rayleigh number\/} Ra.
When the top and bottom temperatures are held at fixed values $T_1$
and $T_0$, Ra is proportional to $T_0-T_1$.
The relevant parameters in our problem would be
Ra${}_T\propto |\flt|_{L^2(\dy\Dom)}^{}$ and Ra${}_S\propto |\fls|_{L^2(\dy\Dom)}^{}$,
but we will not consider them explicitly here;
see, e.g., (2.11) in \cite{balmforth-al:06}.
For notational conciseness, we denote the variables $U:=(\omega,T,S)$,
the boundary forcing $Q:=(\flu,\flt,\fls)$ and the parameters
$\parm:=(\ppr,\pts,\xnd)$.


We do not provide details on the convergence of the global attractors and
long time statistical properties. Such kind of convergence can be obtained
by following established methods once we have the uniform estimates derived
here.
See \cite{hill-suli:95a} for the convergence of the global attractors and
\cite{wxm:12} for the convergence of long time statistical properties.

\medskip
The rest of this paper is structured as follows.
In section~\ref{s:cts} we review briefly the properties of the continuous
system, setting up the scene and the notation for its discretisation.
Next, we describe the time discrete system and derive uniform bounds
for the solution.
In the appendix, we present an alternate derivation of the boundedness
results in \cite{wxm:12}, without using Wente-type estimates but requiring
slightly more regular initial data.


\section{Properties of the continuous system}\label{s:cts}

In this section, we obtain uniform bounds on the solution of our system
and use them to prove the existence of a global attractor $\Attr$.
For the single diffusion case (of $T$ only, without $S$), this problem
has been treated in \cite{foias-manley-temam:87b} which we follow in
spirit, though not in detail in order to be closer to our treatment
of the discrete case.

We start by noting that the zero-integral conditions on $\omega$, $T$
and $S$ imply the Poincar{\'e} inequalities
\begin{equation}\label{q:cpoi}
   |\omega|_{L^2(\Dom)}^2 \le \cpoi\,|\gb\omega|_{L^2(\Dom)}^2,
\end{equation}
as well as the equivalence of the norms
\begin{equation}\label{q:normeq}
   |\omega|_{H^1(\Dom)}^{} \le c\,|\gb\omega|_{L^2(\Dom)}^{},
\end{equation}
with analogous inequalities for $T$ and $S$.
The boundary condition $\psi=0$ implies that \eqref{q:cpoi}--\eqref{q:normeq}
also hold for $\psi$, while an elliptic regularity estimate
\cite[Cor.~8.7]{gilbarg-trudinger:epde} implies that
\begin{equation}\label{q:wpoi}
   |\gb\psi|_{L^2(\Dom)}^2 \le \cpoi\,|\omega|_{L^2(\Dom)}^2.
\end{equation}
Following the argument in \cite{gtw3:dodu}, this also holds
for functions, such as our $T$ and $S$, with zero integrals in $\Dom$.

Let $\Omega$ be an $H^2$ extension of $\flu$ to $\bar\Dom$ (further
requirements will be imposed below) and let $\wh:=\omega-\Omega$;
we also define $\Delta\psih:=\wh$ and $\Delta\Psi:=\Omega$ with
homogeneous boundary conditions.
Now $\wh$ satisfies the homogeneous boundary conditions $\wh=0$ on $\dy\Dom$,
and thus the Poincar{\'e} inequality \eqref{q:cpoi}--\eqref{q:normeq}.
Furthermore, let $\tq\in \dot H^2(\Dom)$ be such that $\dy_z\tq=\flt$
on $\dy\Dom$ (with other constraints to be imposed below) and
let $\Th:=T-\tq$;
analogously for $\sq$ and $\Sh:=S-\sq$.
We note that since both $\Th$ and $\Sh$ have zero integrals over $\Dom$,
they satisfy the Poincar{\'e} inequality \eqref{q:cpoi}--\eqref{q:normeq}.

We start with weak solutions of \eqref{q:dudt}.
For conciseness, unadorned norms and inner products are understood to be
$L^2(\Dom)$, $|\cdot|:=|\cdot|_{L^2(\Dom)}^{}$ and
$(\cdot,\cdot):=(\cdot,\cdot)_{L^2(\Dom)}^{}$.
With $\wh$, $\Th$ and $\Sh$ as defined above, we have
\begin{equation}\label{q:duhdt}\begin{aligned}
   &\dy_t\wh + \dy(\Psi+\psih,\Omega+\wh) = \ppr\bigl\{\Delta\wh + \Delta\Omega + \dy_x\tq + \dy_x\Th - \dy_x\sq - \dy_x\Sh\bigr\}\\
   &\dy_t\Th + \dy(\Psi+\psih,\tq+\Th) = \Delta\tq + \Delta\Th\\
   &\dy_t\Sh + \dy(\Psi+\psih,\sq+\Sh) = \pts\,(\Delta\sq + \Delta\Sh).
\end{aligned}\end{equation}
On a fixed time interval $[0,T_*)$, a weak solution of \eqref{q:duhdt} are
\begin{equation}\begin{aligned}
   &\wh \in C^0(0,T_*;L^2(\Dom)) \cap L^2(0,T_*;H^1_0(\Dom))\\
   &\Th \in C^0(0,T_*;L^2(\Dom)) \cap L^2(0,T_*;H^1(\Dom))\\
   &\Sh \in C^0(0,T_*;L^2(\Dom)) \cap L^2(0,T_*;H^1(\Dom))
\end{aligned}\end{equation}
such that, for all $\tilde\omega\in H^1_0(\Dom)$, $\tilde T$,
$\tilde S\in H^1(\Dom)$, the following holds in the distributional sense,
\begin{equation}\begin{aligned}
   &\ddt{\;}(\wh,\tilde\omega)
   + (\dy(\Psi+\psih,\Omega+\wh),\tilde\omega)\\
   &\qquad+ \ppr\,\bigl\{ (\gb\Omega+\gb\wh,\gb\tilde\omega)
        - (\dy_x\tq+\dy_x\Th,\tilde\omega) + (\dy_x\sq+\dy_x\Sh,\tilde\omega) \bigr\} = 0\\
   &\ddt{\;}(\Th,\tilde T) + (\dy(\Psi+\psih,\tq+\Th),\tilde T)
	+ (\gb\Th,\gb\tilde T) - (\Delta\tq,\tilde T) = 0\\
   &\ddt{\;}(\Sh,\tilde S) + (\dy(\Psi+\psih,\sq+\Sh),\tilde S)
	+ \pts\,(\gb\Sh,\gb\tilde S) - \pts\,(\Delta\sq,\tilde S) = 0.
\end{aligned}\end{equation}
The existence of such solutions can be obtained by standard methods,
so we do not do so explicitly here.

Next, we derive $L^2$ inequalities for $T$, $S$ and $\omega$.
Multiplying (\ref{q:duhdt}a)
by $\wh$ in $L^2(\Dom)$ and noting that $(\dy(\psi,\wh),\wh)=0$, we find
\begin{equation}\begin{aligned}
   \frac12\ddt{\;}|\wh|^2 + \ppr\,|\gb\wh|^2
	&= -(\dy(\Psi,\Omega),\wh) - (\dy(\psih,\Omega),\wh)\\
	&\qquad {}+ \ppr\bigl\{(\Delta\Omega,\wh) + (\dy_xT,\wh) - (\dy_xS,\wh)\bigr\}.
\end{aligned}\end{equation}
We bound the rhs as
\begin{align*}
   &\bigl|(\Delta\Omega,\wh)\bigr| = |\gb\Omega|\,|\gb\wh|
	\le \sfrac18\,|\gb\wh|^2 + 2\,|\gb\Omega|^2\\
   &\bigl|(\dy_xT,\wh)\bigr| = |\dy_x\wh|\,|T|
	\le \sfrac18\,|\gb\wh|^2 + 2\,|T|^2
	\le \sfrac18\,|\gb\wh|^2 + 4\cpoi|\gb\Th|^2 + 4\,|\tq|^2\\
   &\bigl|(\dy_xS,\wh)\bigr| = |\dy_x\wh|\,|S|
	\le \sfrac18\,|\gb\wh|^2 + 2\,|S|^2
	\le \sfrac18\,|\gb\wh|^2 + 4\cpoi|\gb\Sh|^2 + 4\,|\sq|^2.
\end{align*}
and the ``nonlinear'' terms as
\begin{equation}\begin{aligned}
   \bigl|(\dy(\psih,\wh),\Omega)\bigr|
	&\le c\,|\gb\psih|_{L^\infty}^{}|\gb\wh|_{L^2}^{}|\Omega|_{L^2}^{}
	\le \frac{c_1}2|\Omega|_{L^2}^{}|\gb\wh|^2\\
   \bigl|(\dy(\Psi,\wh),\Omega)\bigr|
	&\le c\,|\gb\Psi|_{L^\infty}^{}|\gb\wh|_{L^2}^{}|\Omega|_{L^2}^{}
	\le \frac\ppr8\,|\gb\wh|^2 + \frac{c}\ppr\,|\gb\Psi|_{L^\infty}^2|\Omega|^2.
\end{aligned}\end{equation}
This brings us to
\begin{equation}\label{q:icl2w}\begin{aligned}
   \ddt{\;}|\wh|^2 + (\ppr-c_1|\Omega|)|\gb\wh|^2
	&\le 4\ppr\cpoi(|\gb\Th|^2 + |\gb\Sh|^2)\\
	&\quad+ \frac{c}\ppr\,|\gb\Psi|_{L^\infty}^2|\Omega|^2
	+ 4\ppr\,(|\gb\Omega|^2 + |\tq|^2 + |\sq|^2).
\end{aligned}\end{equation}

Now for $\Sh$, we multiply (\ref{q:dudt}c), or equivalently,
\begin{equation}\label{q:dshdt}\begin{aligned}
   \dy_t\Sh + \dy(\psi,\Sh+\sq) = \pts\,(\Delta\Sh + \Delta\sq),
\end{aligned}\end{equation}
by $\Sh$ in $L^2(\Dom)$ and use $(\dy(\psi,\Sh),\Sh)=0$ to find
\begin{equation}\begin{aligned}
   \frac12\ddt{\;}|\Sh|^2 + \pts\,|\gb\Sh|^2
	= -(\dy(\Psi,\sq),\Sh) - (\dy(\psih,\sq),\Sh) + \pts\,(\Delta\sq,\Sh).
\end{aligned}\end{equation}
The last term on the rhs requires some care,
\begin{equation}\label{q:BCst}\begin{aligned}
   \bigl|(\Delta\sq,\Sh)\bigr|
	&\le \bigl|(\fls,\Sh)_{L^2(\dy\Dom)}^{}\bigr| + \bigl|(\gb\sq,\gb\Sh)\bigr|\\
	&\le c\,|\fls|_{H^{-1/2}(\dy\Dom)}^{}|\Sh|_{H^{1/2}(\dy\Dom)}^{} + |\gb\fls|\,|\gb\Sh|\\
	&\le \sfrac18\,|\gb\Sh|^2 + c\,(|\gb\sq|^2 + |\fls|_{H^{-1/2}(\dy\Dom)}^2)\\
	&\le \sfrac18\,|\gb\Sh|^2 + c\,|\gb\sq|^2
\end{aligned}\end{equation}
where we have used the trace theorem for the second and last inequalities.
We note that $|\gb\sq|_{L^2(\Dom)}^{}$ ultimately depends on
$|\fls|_{H^{-1/2}(\dy\Dom)}^{}$ plus the constraint \eqref{q:qc} below.
Bounding the ``nonlinear'' terms as
\begin{equation}\begin{aligned}
   \bigl|(\dy(\Psi,\Sh),\sq)\bigr|
	&\le c\,|\gb\Psi|_{L^\infty}^{}|\gb\Sh|_{L^2}^{}|\sq|_{L^2}^{}
	\le \frac\pts8\,|\gb\Sh|^2 + \frac{c}\pts\,|\gb\Psi|_{L^\infty}^2|\sq|^2\\
   \bigl|(\dy(\psih,\Sh),\sq)\bigr|
	&\le c\,|\gb\psih|_{L^\infty}^{}|\gb\Sh|_{L^2}^{}|\sq|_{L^2}^{}
	\le \frac\pts8\,|\gb\Sh|^2 + \frac{c}\pts\,|\gb\wh|^2|\sq|^2,
\end{aligned}\end{equation}
we arrive at
\begin{equation}\label{q:icl2s}\begin{aligned}
   \ddt{\;}|\Sh|^2 + \pts\,|\gb\Sh|^2
	\le \frac{c}\pts\,|\gb\wh|^2|\sq|^2
	+ \frac{c}\pts\,|\gb\Psi|_{L^\infty}^2|\sq|^2
	+ c\pts\,|\gb\sq|^2.
\end{aligned}\end{equation}
Analogously, we have for $\Th$,
\begin{equation}\label{q:icl2t}
   \ddt{\;}|\Th|^2 + |\gb\Th|^2
	\le c\,|\gb\wh|^2|\tq|^2
	+ c\,|\gb\Psi|_{L^\infty}^2|\tq|^2 + c\,|\gb\tq|^2.
\end{equation}

Adding $8\ppr\cpoi$ times \eqref{q:icl2t} and
$8\ppr\cpoi/\pts$ times \eqref{q:icl2s} to \eqref{q:icl2w}, we find
\begin{equation}\label{q:aux00}\begin{aligned}
   \ddt{\;}\Bigl(|\wh|^2 &+ 8\ppr\cpoi|\Th|^2 + \frac{8\ppr\cpoi}\pts|\Sh|^2\Bigr)
	+ 4\ppr\cpoi\,(|\gb\Th|^2 + |\gb\Sh|^2)\\
	&\qquad\qquad+ \Bigl(\ppr-c_1|\Omega|-c_2\ppr|\tq|^2-\frac{c_2\ppr}{\pts^2}|\sq|^2\Bigl)|\gb\wh|^2\\
	&\le c\ppr\,|\gb\Psi|_{L^\infty}^2\bigl(|\Omega|^2/\ppr^2 + |\tq|^2 + |\sq|^2/\pts^2\bigr)\\
	&\qquad\qquad+ c\ppr\,(|\gb\Omega|^2 + |\gb\tq|^2 + |\gb\sq|^2).
\end{aligned}\end{equation}
If we now choose $\Omega$, $\tq$ and $\sq$ such that
\begin{equation}\label{q:qc}
   |\Omega|_{L^2}^{} \le \ppr/(8c_1),\quad
   |\tq|_{L^2}^2 \le 1/(8c_2)
   \aand
   |\sq|_{L^2}^2 \le \pts^2/(8c_2),
\end{equation}
(given the BC \eqref{q:BC},
this can always be done at the price of making $\gb\Omega$, $\gb\tq$ and
$\gb\sq$ large) we obtain the differential inequality
\begin{equation}\label{q:icl2}
   \ddt{\;}\Bigl(|\wh|^2 + 8\ppr\cpoi|\Th|^2 + \frac{8\ppr\cpoi}\pts|\Sh|^2\Bigr)
	+ \frac\ppr2|\gb\wh|^2 + 4\ppr\cpoi\,(|\gb\Th|^2 + |\gb\Sh|^2)
	\le \|F\|^2,
\end{equation}
with $\|F\|^2$ denoting the purely ``forcing'' terms on the rhs
of \eqref{q:aux00}.
Integrating this using the Gronwall lemma, we obtain the uniform bounds, with
$|\Uh|^2=|\wh|^2+8\ppr\cpoi|\Th|^2+8\ppr\cpoi|\Sh|^2/\pts$,
\begin{equation}\label{q:bdcl2}\begin{aligned}
   &|\Uh(t)|^2 \le \ex^{-\lambda t}|\Uh(0)|^2 + \|F\|^2/\lambda\\
   &c_3\ppr\int_t^{t+1} \bigl\{|\gb\wh|^2 + |\gb\Th|^2 + |\gb\Sh|^2\bigr\}(t') \dt'
	\le \ex^{-\lambda t}|U(0)|^2 + (1+1/\lambda)\,\|F\|^2
\end{aligned}\end{equation}
valid for all $t\ge0$, for some $\lambda(\parm)>0$.
It is clear from (\ref{q:bdcl2}a) that we have an absorbing ball,
i.e.\ $|U(t)|^2 \le M_0(\fla;\parm)$ for all $t\ge t_0(|U(0)|;\parm)$.

\medskip
On to $H^1$, we multiply (\ref{q:duhdt}a) by $-\Delta\wh$ in $L^2$ to find
\begin{equation}\begin{aligned}
   \frac12\ddt{\;}|\gb\wh|^2 &+ \ppr\,|\Delta\wh|^2
	= -(\dy(\gb\psi,\wh),\gb\wh) + (\dy(\psi,\Omega),\Delta\wh)\\
	&- \ppr\,(\Delta\Omega,\Delta\wh) - \ppr\,(\dy_x T,\Delta\wh) + \ppr\,(\dy_x S,\Delta\wh).
\end{aligned}\end{equation}
Bounding the linear terms in the obvious way, and the nonlinear terms as
\begin{align*}
   \bigl|(\dy(\gb\psi,\wh),\gb\wh)\bigr|
	&\le c\,|\gb\wh|_{L^4}^2|\gb^2\psi|_{L^2}^{}
	\le c\,|\gb\wh|\,|\Delta\wh|\,|\Delta\psi|\\
	&\le \frac\ppr8\,|\Delta\wh|^2 + \frac{c}\ppr\,|\gb\wh|^2(|\wh|^2 + |\Omega|^2)\\
   \bigl|(\dy(\psi,\Omega),\Delta\wh)\bigr|
	&\le \frac\ppr8\,|\Delta\wh|^2 + \frac{c}\ppr\,|\gb\Omega|^2\bigl(|\gb\wh|^2 + |\gb\Psi|_{L^\infty}^2\bigr),
\end{align*}
we find
\begin{equation}\label{q:ich1w}\begin{aligned}
   \ddt{\;}|\gb\wh|^2 &+ \ppr\,|\Delta\wh|^2
	\le \frac{c}\ppr\,|\gb\wh|^2(|\wh|^2 + |\Omega|^2 + |\gb\Omega|^2)
	+ \frac{c}\ppr\,|\gb\Psi|_{L^\infty}^2|\gb\Omega|^2\\
	&+ 8\ppr\,\bigl(|\gb\Th|^2 + |\gb\Sh|^2 + |\gb\tq|^2 + |\gb\sq|^2 + |\Delta\Omega|^2\bigr).
\end{aligned}\end{equation}
Since $\wh$, $\Th$ and $\Sh$ have been bounded uniformly in $L_{t,1}^2H_x^1$
in (\ref{q:bdcl2}b), we can integrate \eqref{q:ich1w} using the uniform
Gronwall lemma to obtain a uniform bound for $|\gb\wh|^2$,
\begin{equation}\label{q:bdch1w}
   |\gb\wh(t)|^2 \le M_1(\cdots)
   \qquad\textrm{and}\qquad
   \int_t^{t+1} |\Delta\wh(t')|^2 \dt' \le \mt_1(\cdots).
\end{equation}

Similarly, multiplying \eqref{q:dshdt} by $-\Delta\Sh$ in $L^2$, we find
\begin{equation}\begin{aligned}
   \frac12\ddt{\;}|\gb\Sh|^2 + \pts\,|\Delta\Sh|^2
	= &- \pts\,(\Delta\sq,\Delta\Sh)\\
	&- (\dy(\gb\psi,\Sh),\gb\Sh) + (\dy(\psi,\sq),\Delta\Sh).
\end{aligned}\end{equation}
Bounding as we did for $\wh$, we arrive at
\begin{equation}\begin{aligned}
   \ddt{\;}|\gb\Sh|^2 + \pts\,|\Delta\Sh|^2
	&\le 8\pts\,|\Delta\sq|^2\\
	&+ \frac{c}\pts\,|\gb\Sh|^2(|\wh|^2 + |\Omega|^2)
	+ \frac{c}\pts\,|\gb\sq|^2\bigl(|\gb\wh|^2 + |\gb\Psi|_{L^\infty}^2\bigr),
\end{aligned}\end{equation}
which can be integrated using the uniform Gronwall lemma to obtain
\begin{equation}\label{q:bdch1s}
   |\gb\Sh(t)|^2 \le M_1(\cdots)
   \qquad\textrm{and}\qquad
   \int_t^{t+1} |\Delta\Sh(t')|^2 \dt' \le \mt_1(\cdots).
\end{equation}
Obviously one has the analogous bound for $\Th$,
\begin{equation}\label{q:bdch1t}
   |\gb\Th(t)|^2 \le M_1(\cdots)
   \qquad\textrm{and}\qquad
   \int_t^{t+1} |\Delta\Th(t')|^2 \dt' \le \mt_1(\cdots).
\end{equation}


\medskip
These bounds allow us to conclude \cite{temam:iddsmp} the existence
of a global attractor $\Attr$ and of an invariant measure $\mu$
supported on $\Attr$.
Given a continuous functional $\Phi$, its long-time average satisfies
\begin{equation}
   \LIM_{t\to\infty} \frac1t \int_0^t \Phi(\dds(t)U_0) \dt
	= \int_H \Phi(U) \dmu(U)
\end{equation}
where $U(t)=\dds(t)U_0$ is the solution of \eqref{q:dudt} with initial
data $U_0$.
It is known that $\Attr$ is unique while $\mu$ may depend on
the initial data $U_0$ and the definition of the generalised limit $\LIM$.

Due to the boundary conditions, one cannot simply multiply by $\Delta^2\wh$,
etc., to obtain a bound in $H^2$, but following \cite[\S6.2]{temam:nsenfa},
one takes time derivative of (\ref{q:dudt}a) and uses the resulting bound
on $|\dy_t\omega|$ to bound $|\Delta\omega|$, etc.
We shall not do this explicitly here, although similar ideas are used for
the discrete case below (proof of Theorem~\ref{t:h2}).


\section{Numerical scheme: boundedness}\label{s:disc}

Fixing a timestep $k>0$,
we discretise the system \eqref{q:dudt} in time by the following two-step
explicit--implicit scheme,
\begin{equation}\label{q:ei}\begin{aligned}
   &\frac{3\omega^{n+1}-4\omega^n+\omega^{n-1}}{2k}
	+ \dy(2\psi^n-\psi^{n-1},2\omega^n-\omega^{n-1})\\
	&\hbox to132pt{}= \ppr\bigl\{\Delta\omega^{n+1} + \dy_x T^{n+1} - \dy_x S^{n+1}\bigr\}\\
   &\frac{3T^{n+1}-4T^n+T^{n-1}}{2k}
	+ \dy(2\psi^n-\psi^{n-1},2T^n-T^{n-1})
	= \Delta T^{n+1}\\
   &\frac{3S^{n+1}-4S^n+S^{n-1}}{2k}
	+ \dy(2\psi^n-\psi^{n-1},2S^n-S^{n-1})
	= \pts\Delta S^{n+1},
\end{aligned}\end{equation}
plus the boundary conditions \eqref{q:BC}.
Writing $U^n=(\omega^n,T^n,S^n)$,
we assume that the second initial data $U^1$ has been obtained from
$U^0$ using some reasonable one-step method, but all we shall need
for what follows is that $U^1\in H^1(\Dom)$.
The time derivative term is that of the backward differentiation
formula (BDF) and the explicit nonlinear term is sometimes known
as a ``one-leg method'' \cite[(V.6.6)]{hairer-wanner:sode2}.
This results in a method that is essentially explicit yet
second order in time, and as we shall see below, preserves
the important invariants of the continuous system.

Subject to some restrictions on the timestep $k$, we can obtain
uniform bounds and absorbing balls for the solution of the discrete
system analogous to those of the continuous system.
Our first result is the following:

\begin{theorem}\label{t:h1}
With $\ff\in H^{3/2}(\dy\Dom)$, the scheme \eqref{q:ei} defines
a discrete dynamical system in $H^1(\Dom)\times H^1(\Dom)$.
Assuming $U^0$, $U^1\in H^1(\Dom)$ and the timestep restriction
given in \eqref{q:k1} below,
\begin{equation}\label{q:dt}
   k \le k_1(|U^0|_{H^1}^{},|U^1|_{H^1}^{};|\ff|_{H^{1/2}(\dy\Dom)}^{},\parm),
\end{equation}
the following bounds hold
\begin{align}
   &|U^n|_{L^2}^2 \le 40\,\ex^{-\nu nk/4}\bigl(|U^0|_{L^2}^2 + |U^1|_{L^2}^2\bigr)
	+ M_0(|\ff|_{H^{1/2}(\dy\Dom)}^{};\parm)\notag\\
	&\hbox to60pt{}+ c(|\ff|_{H^{-1/2}(\dy\Dom)}^{};\parm)k\,\ex^{-\nu nk/4}\bigl(|U^0|_{H^1}^2 + |U^1|_{H^1}^2\bigr),\label{q:l2}\\
   &|U^n|_{H^1}^2 \le N_1(nk;|U^0|_{H^1}^{},|U^1|_{H^1}^{},|\ff|_{H^{1/2}(\dy\Dom)}^{},\parm) + M_1(|\ff|_{H^{3/2}(\dy\Dom)}^{};\parm), \label{q:h1}
\end{align}
where $\nu(\parm)>0$ and
$N_1(t;\cdots)=0$ for $t\ge t_1(|U^0|_{H^1}^{},|U^1|_{H^1}^{};\ff,\parm)$.
\end{theorem}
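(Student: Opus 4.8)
The plan is to mirror the continuous-case energy estimates from Section~\ref{s:cts} at the discrete level, using the algebraic identity associated with the BDF2/one-leg pairing. The key observation is that testing the scheme \eqref{q:ei} against $2U^{n+1}-\sfrac12 U^{n} $-type combinations (equivalently, the ``$G$-stability'' identity for BDF2) produces a telescoping quadratic form: $(3a_{n+1}-4a_n+a_{n-1},a_{n+1}) = \sfrac12(|a_{n+1}|^2+|2a_{n+1}-a_n|^2) - \sfrac12(|a_n|^2+|2a_n-a_{n-1}|^2) + \sfrac12|a_{n+1}-2a_n+a_{n-1}|^2$, so the natural energy is $E^n = |\wh^n|^2+|2\wh^n-\wh^{n-1}|^2$ plus the analogous $\Th$, $\Sh$ pieces weighted as in \eqref{q:icl2}. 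First I would shift to the homogeneous variables $\wh^n=\omega^n-\Omega$, $\Th^n=T^n-\tq$, $\Sh^n=S^n-\sq$ (the forcing is time-independent, so the BDF2 difference kills the $\Omega,\tq,\sq$ contributions to the time-derivative term) and choose $\Omega,\tq,\sq$ to satisfy the smallness constraints \eqref{q:qc}. Then I would reproduce the computation leading to \eqref{q:icl2} term by term, the crucial point being that the one-leg evaluation of the Jacobian at $2\psi^n-\psi^{n-1}$, tested against $2U^n-U^{n-1}$, still annihilates the transport nonlinearity $(\dy(\cdot,v),v)=0$ exactly, just as in the continuous case.

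Next, for the $L^2$ bound \eqref{q:l2}: after establishing the discrete analogue of \eqref{q:icl2}, the dissipation term on the left controls $E^{n+1}$ by Poincar\'e \eqref{q:cpoi}, yielding $E^{n+1} \le \theta E^n + Ck\|F\|^2$ with some $\theta<1$ provided $k$ is small enough that the cross terms (coming from $|\gb\Psi|_{L^\infty}$, which must itself be controlled in terms of $|\Omega|$ and the data) and the truncation-type term $\sfrac12|U^{n+1}-2U^n+U^{n-1}|^2$ are absorbed. Iterating gives the geometric decay $\ex^{-\nu nk/4}$ plus the steady $M_0$; the residual term $k\,\ex^{-\nu nk/4}(|U^0|_{H^1}^2+|U^1|_{H^1}^2)$ in \eqref{q:l2} is exactly the price of bounding the first one or two steps, where $E^1$ involves $|\gb\wh^0|$ etc.\ rather than just $L^2$ norms, before the iteration takes over. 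This is where the timestep restriction \eqref{q:dt}, depending on the $H^1$ norms of the initial data, enters.

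For the $H^1$ bound \eqref{q:h1} I would test \eqref{q:ei} against $-\Delta(3U^{n+1}-4U^n+U^{n-1})$-type combinations (again using the $G$-stability form, now in the $\gb$ inner product), reproducing the structure of \eqref{q:ich1w}: the nonlinear terms are handled by the same Ladyzhenskaya/Agmon-type estimates $|\gb\wh|_{L^4}^2\le c|\gb\wh||\Delta\wh|$, and the already-established uniform $L^2$ bound plus the summed-up bound $\sum k|\Delta\wh^n|^2$-type quantity (the discrete analogue of (\ref{q:bdcl2}b)) feeds a discrete uniform Gronwall lemma to close the $H^1$ estimate, giving $N_1$ (which decays to zero in finite time) plus the steady $M_1$ depending on $|\ff|_{H^{3/2}(\dy\Dom)}$ through $|\Delta\Omega|$, $|\gb^2\tq|$, etc. The statement that the scheme defines a dynamical system in $H^1\times H^1$ follows from well-posedness of the linear elliptic problem for $U^{n+1}$ at each step (the implicit part is just $(\sfrac{3}{2k}-\ppr\Delta)$ applied componentwise, invertible with the given boundary conditions) together with these a priori bounds.

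\medskip
The main obstacle I expect is the handling of the extra ``anti-dissipative'' term $\sfrac12|U^{n+1}-2U^n+U^{n-1}|^2$ produced by the BDF2 identity, together with the explicit (one-leg) treatment of the nonlinearity: unlike in the continuous case, one cannot simply integrate by parts away the nonlinear contribution against $-\Delta$ of the time difference, so one must trade $|U^{n+1}-2U^n+U^{n-1}|$ against the parabolic dissipation at the cost of a CFL-type smallness condition on $k$ — and crucially this condition must end up depending only on the data norms and $\parm$, not on $n$, which forces one to first secure the $L^2$ absorbing ball and only then bootstrap to $H^1$ with a restriction that, after a finite transient, no longer involves the initial data. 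Making the chain of constants genuinely uniform in $n$ (so that $\nu$ and $M_0,M_1$ depend only on $|\ff|$ and $\parm$) is the delicate bookkeeping step.
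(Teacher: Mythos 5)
Your overall architecture (homogenise with $\Omega,\tq,\sq$ subject to the smallness constraints, run a $G$-stable BDF2 energy estimate in $L^2$ first, then bootstrap to $H^1$ with a discrete uniform-Gronwall / good-index argument) matches the paper's. But there is a genuine gap in how you treat the explicit nonlinearity, and it sits at the heart of the proof. You claim the Jacobian, evaluated at $2\psi^n-\psi^{n-1}$ and ``tested against $2U^n-U^{n-1}$'', is annihilated exactly. The problem is that the $G$-stability identity \eqref{q:hs00} forces you to test the \emph{whole} equation against $U^{n+1}$ (that is what makes the time-difference term telescope and the implicit dissipation coercive); you cannot simultaneously test the nonlinearity against $2U^n-U^{n-1}$. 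Tested against $\wh^{n+1}$, the transport term does \emph{not} vanish: after using antisymmetry and $(\dy(\phi,v),v)=0$ its residual is precisely $(\dy(2\psi^n-\psi^{n-1},\wh^{n+1}),(1+\nu k)\wh^{n+1}-2\wh^n+\wh^{n-1})$, as in \eqref{q:c00}, and the only thing available to absorb the second factor is the positive quadratic term $|(1+\nu k)\wh^{n+1}-2\wh^n+\wh^{n-1}|^2/(2(1+\nu k))$ that the identity \eqref{q:hs00} puts on the left (note this term is numerical \emph{dissipation}, not the ``anti-dissipative'' obstruction you describe; it is the resource, not the enemy). Absorbing it costs a factor $|2\gb\psi^n-\gb\psi^{n-1}|_{L^\infty}^2$, as in \eqref{q:c01}.

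That is where the second, related gap appears: $|2\gb\psi^n-\gb\psi^{n-1}|_{L^\infty}$ depends on the solution at step $n$, so your claimed contraction $E^{n+1}\le\theta E^n+Ck\|F\|^2$ has a constant that is a priori $n$-dependent and the argument is circular. (Your parenthetical about controlling $|\gb\Psi|_{L^\infty}$ misses the point --- $\Psi$ comes from the fixed extension $\Omega$ and is harmless; the solution-dependent piece $\gb\psih^n$ is the issue.) The paper closes this loop with an explicit bootstrap: assume the sub-critical a priori bound $|\wh^n|_{H^{1/2}}^2\le k^{-1/2}\mw$ \eqref{q:whalf}, which under the timestep restriction \eqref{q:k1} makes $k$ times the offending product absorbable by the quadratic term; then, once the uniform $L^2$ and $k\sum|\gb\wh^j|^2$ bounds are in hand, recover \eqref{q:whalf} a posteriori by the interpolation $|\wh^n|_{H^{1/2}}^2\le c\,|\wh^n|\,|\gb\wh^n|\le c(\ppr k)^{-1/2}(\cdots)$ as in \eqref{q:whalf1}. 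You correctly flag ``making the constants uniform in $n$'' as the delicate step, but you supply no mechanism for it; without something like \eqref{q:whalf}--\eqref{q:whalf1} (or an induction on $n$ carrying the $H^{1/2}$ bound) the $L^2$ estimate, and hence everything downstream, does not close.
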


We note that the last term in \eqref{q:l2} has no analogue in the
continuous case;
we believe this is an artefact of our proof, but have not been able
to circumvent it.
Unlike in \cite{wxm:12}, $H^2$ bounds do not follow as readily due to
the boundary conditions, so we proceed by first deriving bounds for
$|U^{n+1}\!-U^n|$, using an approach inspired by \cite[\S6.2]{temam:nsenfa}.
We state our result without the transient terms:

\begin{theorem}\label{t:h2}
Assume the hypotheses of Theorem~\ref{t:h1}.
Then for sufficiently large $nk$, one has
\begin{equation}\label{q:bddU}
   |\omega^{n+1}\!-\omega^n|^2 + |T^{n+1}\!-T^n|^2 + |S^{n+1}\!-S^n|^2
	\le k^2 M_\delta(|Q|_{H^{3/2}(\dy\Dom)}^{};\parm).
\end{equation}
Furthermore, for large $nk$ the solution is bounded in $H^2$ as
\begin{equation}\label{q:h2}
   |\Delta\omega^n|^2 + |\Delta T^n|^2 + |\Delta S^n|^2 \le M_2(|Q|_{H^{3/2}(\dy\Dom)}^{};\parm).
\end{equation}
\end{theorem}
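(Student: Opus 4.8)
The plan is to mimic the continuous-case strategy sketched at the end of \S\ref{s:cts}: since the boundary conditions obstruct a direct $H^2$ estimate (one cannot test against $\Delta^2\wh$), we extract $H^2$ control by first bounding the discrete time-difference $|U^{n+1}\!-U^n|$ and then reading off $|\Delta U^{n+1}|$ from the elliptic structure of the scheme. Throughout I work with the shifted variables $\wh^n=\omega^n-\Omega$, $\Th^n=T^n-\tq$, $\Sh^n=S^n-\sq$ (with the same choice of $\Omega,\tq,\sq$ satisfying \eqref{q:qc}), so that the shifted fields obey homogeneous or zero-integral conditions and hence the Poincar\'e inequalities \eqref{q:cpoi}--\eqref{q:normeq}; note the time-difference of the steady extensions vanishes, so $U^{n+1}\!-U^n=\hat U^{n+1}\!-\hat U^n$.

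First I would establish \eqref{q:bddU}. Let $\delta\wh^n:=\wh^{n+1}-\wh^n$, and likewise $\delta\Th^n,\delta\Sh^n$. Subtracting consecutive instances of \eqref{q:ei} gives, schematically,
\begin{equation}
   \frac{3\delta\wh^{n}-4\delta\wh^{n-1}+\delta\wh^{n-2}}{2k}
   + \delta\bigl[\dy(2\psi^{n-1}\!-\psi^{n-2},2\omega^{n-1}\!-\omega^{n-2})\bigr]
   = \ppr\bigl\{\Delta\delta\wh^{n} + \dy_x\delta\Th^{n} - \dy_x\delta\Sh^{n}\bigr\},
\end{equation}
with the two analogous equations for $\delta\Th$ and $\delta\Sh$. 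Testing the $\delta\wh$ equation against $\delta\wh^{n}$, using the algebraic ``$2a(3a-4b+c)=|a|^2-|b|^2+|2a-b|^2-|2b-c|^2+|a-2b+c|^2$'' identity that makes the BDF a $G$-stable (one-leg) scheme, and using $(\dy(\psi,\delta\wh^n),\delta\wh^n)=0$, I bound the difference of the explicit nonlinear terms by writing it as $\dy(\delta(2\psi^{n-1}\!-\psi^{n-2}),2\omega^{n-1}\!-\omega^{n-2})+\dy(2\psi^{n}\!-\psi^{n-1},\delta(2\omega^{n-1}\!-\omega^{n-2}))$ and estimating with the $H^1$ bounds \eqref{q:bdch1w}--\eqref{q:bdch1t} already in force for large $nk$; the factors of $\delta\omega$, $\delta\psi$ carry the needed smallness. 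The coupling terms $\dy_x\delta\Th^n$, $\dy_x\delta\Sh^n$ are absorbed after summing the three estimates with suitable positive weights (as in \eqref{q:aux00}), together with the timestep restriction from Theorem~\ref{t:h1} to keep the coefficient of the dissipation positive. This yields a discrete Gronwall inequality of the form $E^{n}\le(1-c k)E^{n-1}+k^2 R$ with $E^n\sim|\delta\wh^n|^2+\cdots$ and $R$ depending only on $|\ff|_{H^{3/2}(\dy\Dom)}$ and $\parm$; iterating gives $E^n\le k^2 M_\delta$ for $nk$ large, which is \eqref{q:bddU} (the explicit $k^2$ coming from the fact that the right-hand side of the differenced scheme is $O(k)$ after dividing by $3/(2k)$, or equivalently that $\delta\wh^n/k$ is a discrete time derivative with bounded forcing).

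Next, \eqref{q:h2}. Rewrite (\ref{q:ei}a) as an elliptic equation for $\omega^{n+1}$ at the fixed time level:
\begin{equation}
   \ppr\,\Delta\omega^{n+1} = \frac{3\omega^{n+1}-4\omega^n+\omega^{n-1}}{2k}
   + \dy(2\psi^n\!-\psi^{n-1},2\omega^n\!-\omega^{n-1}) - \ppr\,\dy_x T^{n+1} + \ppr\,\dy_x S^{n+1}.
\end{equation}
The first term on the right is $(3\delta\omega^n-\delta\omega^{n-1})/(2k)$ up to relabelling, hence bounded in $L^2$ by $c\,k^{-1}(|\delta\omega^n|+|\delta\omega^{n-1}|)\le c\,M_\delta^{1/2}$ using \eqref{q:bddU}; the nonlinear term is bounded in $L^2$ using the uniform $H^1$ bounds on $\psi$ and $\omega$ (via $|\dy(\psi,\omega)|\le c|\gb\psi|_{L^4}|\gb\omega|_{L^4}\le c|\Delta\psi|\,|\omega|^{1/2}|\Delta\omega|^{1/2}$, with $|\Delta\omega|$ already controlled by \eqref{q:bdch1w}-type bounds on $\hat\omega$ plus $|\Delta\Omega|$)---or more cleanly by interpolation from the $L^2_{t,1}H^2$ bound; and $\dy_x T^{n+1},\dy_x S^{n+1}$ are bounded in $L^2$ by the $H^1$ bounds \eqref{q:h1}. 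With $\omega^{n+1}=\ff$ on $\dy\Dom$ and $\ff\in H^{3/2}(\dy\Dom)$, elliptic regularity (as in \cite[Cor.~8.7]{gilbarg-trudinger:epde}, applied to $\wh^{n+1}$ with homogeneous data plus the $H^2$ extension $\Omega$) gives $|\Delta\omega^{n+1}|\le M_2$. The equations for $T^{n+1}$ and $S^{n+1}$ are handled identically, now using the Neumann data $\dy_z T=\flt$, $\dy_z S=\fls$ with $\flt,\fls\in H^{3/2}(\dy\Dom)$ and the corresponding Neumann elliptic estimate; the zero-integral constraints make this well posed.

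The main obstacle is the first step, specifically controlling the difference of the explicit (one-leg) nonlinear terms in the differenced scheme: unlike the continuous case where $(\dy(\psi,\cdot),\cdot)$ antisymmetry kills the worst term outright, here the nonlinearity is evaluated at a shifted, extrapolated time level $2\psi^n\!-\psi^{n-1}$, so testing against $\delta\wh^n$ leaves genuine cross terms like $(\dy(\delta\psi^{n-1},2\omega^{n-1}\!-\omega^{n-2}),\delta\wh^n)$ that must be absorbed into $\ppr|\gb\delta\wh^n|^2$ at the cost of a timestep restriction, and one must check this restriction is consistent with (indeed implied by, perhaps after shrinking) the $k_1$ of Theorem~\ref{t:h1}. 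Keeping the constants' dependence confined to $|\ff|_{H^{3/2}(\dy\Dom)}$ and $\parm$ (so that no transient initial-data terms survive for large $nk$, as claimed) requires that the $H^1$ bounds invoked be the asymptotic ones, which is why both assertions of the theorem are stated only for $nk$ large.
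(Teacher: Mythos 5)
Your second step (reading $|\Delta\omega^{n+1}|$, $|\Delta T^{n+1}|$, $|\Delta S^{n+1}|$ off the scheme once the time-difference is $O(k)$ in $L^2$, using the discrete $H^1$ bounds for the Jacobian) is essentially the paper's argument and is fine. The gap is in your first step. The equation you obtain by differencing consecutive instances of \eqref{q:ei} is \emph{linear and homogeneous} in the increments $\delta U$ (the boundary forcing $Q$ is time-independent, so it drops out entirely). Consequently, when you test against $\delta\wh^{n}$ and estimate the two pieces of the differenced nonlinearity, every term on the right-hand side is of the form $ck\,M_1\,|2\delta\omega^{n-1}\!-\delta\omega^{n-2}|^2$ or $ck\,(|\delta\Th^{n}|^2+|\delta\Sh^{n}|^2)$ --- i.e.\ quadratic in the very quantities you are trying to bound, multiplied by $k$ times an $H^1$ constant. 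There is no inhomogeneous $k^2R(Q,\parm)$ forcing available at this stage, so the assertion that ``the factors of $\delta\omega$, $\delta\psi$ carry the needed smallness'' is circular: those factors are small only if you already know $|\delta U^n|$ is small. What you actually get is a recursion of the type $E^{n}\le\bigl(\tfrac1{1+\nu k}+ckM_1\bigr)E^{n-1}+ckM_1E^{n-2}$, and since $M_1$ depends on the forcing and need not satisfy $cM_1<\nu$, this is not a contraction and the argument does not close, for any choice of $k$.

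The paper supplies the missing ingredient as a separate preliminary step: test the \emph{un-differenced} scheme against $4k\,\delta\omega^{n+1}$ (and likewise for $\Sh$, $\Th$), use the one-leg identity \eqref{q:idd1} together with the telescoping identity $-2(\Delta\omega^{n+1},\delta\omega^{n+1})=|\gb\omega^{n+1}|^2-|\gb\omega^n|^2+|\gb\delta\omega^{n+1}|^2$, and invoke the uniform $H^1$ bound $M_1$ of Theorem~\ref{t:h1} to conclude $|\delta U^n|^2\le k\,\tilde M_\delta(Q;\parm)$ for large $nk$. Only with this seed estimate in hand does the differenced-equation energy estimate (your step, which is then the paper's \eqref{q:del0}--\eqref{q:del1}) produce a genuine contraction-plus-forcing recursion, the quadratic terms having been converted into $k^2\tilde M_\delta(1+M_1)$ via the a priori bound. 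You need to insert this preliminary estimate before your differencing argument; note also that it is the \emph{discrete} $H^1$ bounds \eqref{q:h1}/\eqref{q:bdh1w} that must be invoked here, not the continuous-time bounds \eqref{q:bdch1w}--\eqref{q:bdch1t} you cite. Finally, be careful with the bookkeeping of powers of $k$ at the end: a recursion $x_{n+1}\le(1+\nu k)^{-1}x_n+k^2R$ equilibrates at $O(kR/\nu)$, so obtaining the full $k^2$ in \eqref{q:bddU} requires tracking which factors of $k$ come from the seed estimate and which from the timestep in front of the dissipation.
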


We remark that these difference and $H^2$ bounds require no additional
hypotheses on $Q$, indicating that Theorem~\ref{t:h1} may be sub-optimal.
We also note that using the same method (and one more derivative on $Q$)
one could bound $|U^{n+1}-U^n|_{H^1}^{}$ and $|U^n|_{H^3}^{}$,
although we will not need these results here.

Following the approach of \cite{wxm:12}, these uniform bounds
(along with the uniform convergence results that follow from them)
then give us the convergence of long-time statistical properties of the
discrete dynamical system \eqref{q:ei} to those of the continuous
system \eqref{q:dudt}.



\begin{proof}[Proof of Theorem~\ref{t:h1}]
Central to our approach is the idea of $G$-stability for multistep methods
\cite[\S V.6]{hairer-wanner:sode2}.
First, for $f$, $g\in L^2(\Dom)$ and $\nu k\in[0,1]$, we define the norm
\begin{equation}\label{q:gndef}
   \gn{f,g}_{\nu k}^2
	= \frac{|f|_{L^2}^2}2 + \frac{5+\nu k}2\,|g|_{L^2}^2 - 2(f,g)_{L^2}^{}.
\end{equation}
Note that our notation is slightly different
from that in \cite{hill-suli:00,wxm:12}.
Since both eigenvalues of the quadratic form are finite and positive
for all $\nu k\in[0,1]$, this norm is equivalent to the $L^2$ norm,
i.e.\ there exist positive constants $c_+$ and $c_-$,
independent of $\nu k\in[0,1]$, such that
\begin{equation}\label{q:equivn}
   c_-(|f|_{L^2}^2 + |g|_{L^2}^2)
	\le \gn{f,g}_{\nu k}^2
	\le c_+(|f|_{L^2}^2 + |g|_{L^2}^2)
\end{equation}
for all $f$, $g\in L^2(\Dom)$;
computing explicitly, we find
\begin{equation}\label{q:c+-}
   c_- = \frac{6-\sqrt{32}}4
   \qquad\textrm{and}\qquad
   c_+ = \frac{7+\sqrt{41}}4.
\end{equation}
As in \cite{wxm:12}, an important tool for our estimates is an identity
first introduced in \cite{hairer-wanner:sode2} for $\nu k=0$;
the following form can be found in \cite[proof of Lemma~6.1]{hill-suli:00}:
for $f$, $g$, $h\in L^2(\Dom)$ and $\nu k\in[0,1]$,
\begin{equation}\label{q:hs00}\begin{aligned}
   (3h-4g+f,h)_{L^2}^{} &+ \nu k\,|h|_{L^2}^2\\
	&= \gn{g,h}_{\nu k}^2 - \frac1{1+\nu k}\gn{f,g}_{\nu k}^2
	+ \frac{|f-2g+(1+\nu k)h|_{L^2}^2}{2(1+\nu k)}.
\end{aligned}\end{equation}

As usual, $c$ denotes generic constants which may take different values
each time it appears.
Numbered constants such as $\cpoi$ have fixed values;
they are independent of the parameters $\ppr$ and $\pts$ unless
noted explicitly.

\medskip
The fact that \eqref{q:ei} forms a discrete dynamical system in $H^1\times H^1$
can be seen by writing
\begin{equation}
   (3-2k\Delta) T^{n+1} = 4T^n - T^{n-1} - 2k\,\dy(2\psi^n-\psi^{n-1},2T^n-T^{n-1})
\end{equation}
and inverting: given $U^{n-1}$ and $U^n\in H^1(\Dom)$, the Jacobian is
in $H^{-1}$, which, with the Neumann BC $\dy_z T^{n+1}=\flt\in H^{1/2}(\dy\Dom)$,
gives $T^{n+1}\in H^1$.
Similarly for $S^{n+1}$ and, since now $T^{n+1}$, $S^{n+1}\in H^1$
and $\omega^{n+1}=\flu\in H^{1/2}(\dy\Dom)$, for $\omega^{n+1}$.
Therefore $(U^{n-1},U^n)\in H^1\times H^1$ maps to $(U^n,U^{n+1})\in H^1\times H^1$.

Let $\wh^n:=\omega^n-\Omega$, $\Th^n:=T^n-\tq$ and $\Sh^n:=S^n-\sq$ be defined
as in the continuous case, i.e.\ $\Omega$, $\tq$, $\sq\in H^2(\Dom)$
satisfying the boundary conditions
$\Omega=\flu$, $\dy_z\tq=\flt$ and $\dy_z\sq=\fls$,
and the constraint \eqref{q:qd}, which is essentially \eqref{q:qc}.
The scheme \eqref{q:ei} then implies
\begin{equation}\label{q:eih}\begin{aligned}
   &\frac{3\wh^{n+1}\!-4\wh^n\!+\wh^{n-1}}{2k}
	+ \dy(2\psi^n\!-\psi^{n-1},2\wh^n\!-\wh^{n-1}\!+\Omega)\\
	&\hbox to132pt{}= \ppr\bigl\{\Delta\wh^{n+1} + \Delta\Omega + \dy_x T^{n+1} - \dy_x S^{n+1}\bigr\}\\
   &\frac{3\Th^{n+1}\!-4\Th^n\!+\Th^{n-1}}{2k}
	+ \dy(2\psi^n\!-\psi^{n-1},2\Th^n\!-\Th^{n-1}\!+\tq)
	= \Delta \Th^{n+1} + \Delta\tq\\
   &\frac{3\Sh^{n+1}\!-4\Sh^n\!+\Sh^{n-1}}{2k}
	\!+ \dy(2\psi^n\!-\psi^{n-1}\!,2\Sh^n\!-\Sh^{n-1}\!\!+\sq)
	= \pts(\Delta \Sh^{n+1} \!+ \!\Delta\sq)
\end{aligned}\end{equation}
where we have kept some $\psi^n$, $T^n$ and $S^n$ for now.
We start by deriving difference inequalities for $\wh^n$, $\Th^n$ and $\Sh^n$.
In order to bound terms of the form
$|\gb\psih^n|_{L^\infty}^2\le c\,|\wh^n|_{H^{1/2}}^2$,
we assume for now the uniform bound
\begin{equation}\label{q:whalf}
   |\wh^n|_{H^{1/2}}^2 \le k^{-1/2}\mw(\cdots)
	\qquad\textrm{for all }n=0,1,2,\cdots
\end{equation}
where $\mw$ will be fixed in \eqref{q:whalf1} below.
We also assume for clarity that $k\le1$.

Multiplying (\ref{q:eih}a) by $2k\wh^{n+1}$ in $L^2(\Dom)$ and
using \eqref{q:hs00}, we find
\begin{equation}\label{q:c00}\begin{aligned}
   \gn{\wh^n,\wh^{n+1}}_{\nu k}^2 - \nu k\,|\wh^{n+1}|^2 + 2\ppr k\,|\gb\wh^{n+1}|^2
	+ \frac{|(1+\nu k)\wh^{n+1} \!- 2\wh^n \!+ \wh^{n-1}|^2}{2\,(1+\nu k)}\\
        = \frac{\gn{\wh^{n-1},\wh^n}_{\nu k}^2}{1\!+\!\nu k}
	- 2k\,(\dy(2\psi^n\!-\psi^{n-1},\wh^{n+1}),(1+\nu k)\wh^{n+1}\!-2\wh^n\!+\wh^{n-1})\\
	{}+ 2k\,(\dy(2\psih^n\!-\psih^{n-1},\wh^{n+1}),\Omega)
	+ 2k\,(\dy(\Psi,\wh^{n+1}),\Omega)\\
	{}+ 2\ppr k\,\bigl\{(\Delta\Omega,\wh^{n+1}) + (\wh^{n+1},\dy_x T^{n+1})
        - (\wh^{n+1},\dy_x S^{n+1})\bigr\}.
\end{aligned}\end{equation}
where $\nu>0$ will be set below.
We bound the last terms as in the continuous case,
\begin{align*}
   &2\,|(\Delta\Omega,\wh^{n+1})\bigr| \le \sfrac18\,|\gb\wh^{n+1}|^2 + 8\,|\gb\Omega|^2\\
   &2\,|(\dy_x T^{n+1},\wh^{n+1})| \le \sfrac18\,|\gb\wh^{n+1}|^2 + 16\cpoi\,|\gb\Th^{n+1}|^2 + 16\,|\tq|^2\\
   &2\,|(\dy_x S^{n+1},\wh^{n+1})| \le \sfrac18\,|\gb\wh^{n+1}|^2 + 16\cpoi\,|\gb\Sh^{n+1}|^2 + 16\,|\sq|^2\\
   &2\,\bigl|(\dy(\Psi,\wh^{n+1}),\Omega)| \le \frac\ppr8\,|\gb\wh^{n+1}|^2 + \frac{c}\ppr\,|\gb\Psi|_{L^\infty}^2|\Omega|^2,
\end{align*}
and the previous one as
\begin{equation}\begin{aligned}
   2\,|(\dy(2\psih^n-\psih^{n-1},\wh^{n+1}),\Omega)|
	&\le c\,|2\gb\psih^n-\gb\psih^{n-1}|_{L^\infty}^{}|\gb\wh^{n+1}|_{L^2}^{}|\Omega|_{L^2}^{}\\
	&\le \frac\ppr8\,|\gb\wh^{n+1}|^2 + \frac{c}\ppr\,(|\gb\wh^{n-1}|^2 + |\gb\wh^n|^2)|\Omega|^2.
\end{aligned}\end{equation}
Taking $\nu=\ppr/(8\cpoi)$ for now, we can bound the second term
in \eqref{q:c00} using the third.
Using \eqref{q:whalf}, we then bound the first nonlinear term as
\begin{equation}\label{q:c01}\begin{aligned}
   \!\!\!\!2\,|(\dy&(2\psi^n\!-\psi^{n-1},\wh^{n+1}),(1+\nu k)\wh^{n+1}\!-2\wh^n\!+\wh^{n-1})|\\
	&\le \frac\ppr8\,|\gb\wh^{n+1}|^2 + \frac{c}\ppr\,|2\gb\psi^n-\gb\psi^{n-1}|_{L^\infty}^2|(1+\nu k)\wh^{n+1}\!-2\wh^n\!+\wh^{n-1}|^2\\
	&\le \frac\ppr8\,|\gb\wh^{n+1}|^2 + c_3\,(k^{-1/2}\mw + |\gb\Psi|_{L^\infty}^2)\frac{|(1+\nu k)\wh^{n+1}\!-2\wh^n\!+\wh^{n-1}|^2}{4\ppr}.
\end{aligned}\end{equation}
Recalling that the validity of \eqref{q:equivn} and \eqref{q:c+-}
demands $k\le1/\nu$, which we henceforth assume,
we have $2(1+\nu k)\le4$.
This then implies that $k$ times the last term in \eqref{q:c01} can be
majorised by the fourth term in \eqref{q:c00} if $k$ is small enough that
\begin{equation}
   c_3k^{1/2}(\mw + |\gb\Psi|_{L^\infty}^2) \le \ppr.
\end{equation}
All this brings us to [cf.~\eqref{q:icl2w}]
\begin{equation}\label{q:idl2w}\begin{aligned}
   \!\!\!\!\!\gn{\wh^n,\wh^{n+1}}_{\nu k}^2 &+ \ppr k\,|\gb\wh^{n+1}|^2
	\le \frac{\gn{\wh^{n-1},\wh^n}_{\nu k}^2}{1+\nu k}\\
	&+ \frac{ck}\ppr\,(|\gb\wh^{n-1}|^2 + |\gb\wh^n|^2)|\Omega|^2
	+ {16\cpoi\ppr k}\,(|\gb\Th^{n+1}|^2 + |\gb\Sh^{n+1}|^2)\\
	&+ ck\,(|\gb\Psi|_{L^\infty}^2|\Omega|^2/\ppr + \ppr\,|\tq|^2 + \ppr\,|\sq|^2 + \ppr\,|\gb\Omega|^2).
\end{aligned}\end{equation}

For $\Sh^n$, we multiply (\ref{q:eih}c) by $2k\Sh^{n+1}$ in $L^2(\Dom)$
and use \eqref{q:hs00} to find
\begin{equation*}\begin{aligned}
   \gn{\Sh^n,\Sh^{n+1}}_{\nu k}^2 - \nu k\,|\Sh^{n+1}|^2 + 2\pts k\,|\gb\Sh^{n+1}|^2
	+ \frac{|(1+\nu k)\Sh^{n+1}\!-2\Sh^n\!+\Sh^{n-1}|^2}{2\,(1+\nu k)}\\
	= \frac{\gn{\Sh^{n-1},\Sh^n}_{\nu k}^2}{1+\nu k}
	- 2k\,(\dy(2\psi^n\!-\psi^{n-1},\Sh^{n+1}),(1+\nu k)\,\Sh^{n+1}\!-2\Sh^n\!+\Sh^{n-1})\\
	{}+ 2k\,(\dy(2\psih^n\!-\psih^{n-1},\Sh^{n+1}),\sq)
	+ 2k\,(\dy(\Psi,\Sh^{n+1}),\sq)
	+ 2\pts k\,(\Delta\sq,\Sh^{n+1}).
\end{aligned}\end{equation*}
Bounding the last term as in \eqref{q:BCst} and everything else as
with $\wh^n$, and taking (this also takes care of $\Th^n$ below)
\begin{align}
   &\nu = \min\{\ppr,\pts,1\}/(8\cpoi) \label{q:nu}\\
   &k \le \min\bigl\{\min\{\ppr^2,\pts^2,1\}/(c_3\mw + c_3|\gb\Psi|_{L^\infty}^2)^2,1/\nu\bigr\}, \label{q:k1}
\end{align}
we arrive at
\begin{equation}\label{q:idl2s}\begin{aligned}
   \!\!\gn{\Sh^n,\Sh^{n+1}}_{\nu k}^2 &+ \pts k\,|\gb\Sh^{n+1}|^2
	\le \frac{\gn{\Sh^{n-1}\!,\Sh^n}_{\nu k}^2}{1+\nu k}
	+ \frac{ck}\pts\,(|\gb\wh^{n-1}|^2 \!+ |\gb\wh^n|^2)|\sq|^2\\
	&+ \frac{ck}\pts\,|\gb\Psi|_{L^\infty}^2|\sq|^2
	+ c\pts k\,|\gb\sq|^2. 
\end{aligned}\end{equation}
Similarly, for $\Th^n$ we have
\begin{equation}\label{q:idl2t}\begin{aligned}
   \!\!\gn{\Th^n,\Th^{n+1}}_{\nu k}^2 &+ k\,|\gb\Th^{n+1}|^2
	\le \frac{\gn{\Th^{n-1},\Th^n}_{\nu k}^2}{1+\nu k}
	+ {ck}\,(|\gb\wh^{n-1}|^2 + |\gb\wh^n|^2)|\tq|^2\\
	&+ {ck}\,|\gb\Psi|_{L^\infty}^2|\tq|^2
	+ ck\,|\gb\tq|^2. 
\end{aligned}\end{equation}

Adding $16\ppr\cpoi$ times \eqref{q:idl2t} and
$16\ppr\cpoi/\pts$ times \eqref{q:idl2s} to \eqref{q:idl2w},
and writing
\begin{equation}
   \gn{\Uh^n,\Uh^{n+1}}_{\nu k}^2 := \gn{\wh^n,\wh^{n+1}}_{\nu k}^2
	+ 16\ppr\cpoi\gn{\Th^n,\Th^{n+1}}_{\nu k}^2
	+ 16\ppr\cpoi\gn{\Sh^n,\Sh^{n+1}}_{\nu k}^2/\pts,
\end{equation}
we have
\begin{equation}\label{q:idl2}\begin{aligned}
   \gn{\Uh^n,\Uh^{n+1}}_{\nu k}^2
	&+ \ppr k\,\bigl(|\gb\wh^{n+1}|^2 + 8\cpoi|\gb\Th^{n+1}|^2 + 8\cpoi|\gb\Sh^{n+1}|^2/\pts\bigr)\\
	&\le \frac{\gn{\Uh^{n-1},\Uh^n}_{\nu k}^2}{1+\nu k} + k\,\|F_1\|^2(|\gb\wh^{n-1}|^2 + |\gb\wh^n|^2) + k\,\|F_2\|^2
\end{aligned}\end{equation}
where
\begin{equation}\begin{aligned}
   &\|F_1\|^2 := c_4\ppr\,\bigl(|\Omega|^2/\ppr^2 + |\tq|^2 + |\sq|^2/\pts^2\bigr)\\
   &\|F_2\|^2 := |\gb\Psi|_{L^\infty}^2\|F_1\|^2
	+ c\ppr\,\bigl(|\gb\tq|^2 + |\gb\sq|^2 + |\gb\Omega|^2\bigr).
\end{aligned}\end{equation}

In order to integrate this difference inequality, we consider
a three-term recursion of the form
\begin{equation}\label{q:3tr}
   x_{n+1} + \mu y_{n+1} \le (1+\delta)^{-1} x_n + \eps y_n + \eps y_{n-1} + r_n.
\end{equation}
For $\mu>0$, $\delta\in(0,1]$ and $\eps\in(0,\mu/8]$, we have
\begin{equation}
   x_n + \mu y_n \le \frac{x_{n-m} + \mu y_{n-m}}{(1+\delta)^m}
	+ \frac{\eps\,y_{n-m-1}}{(1+\delta)^{m-1}}
	+ \sum\nolimits_{j=1}^m \frac{r_{n-j}}{(1+\delta)^{j-1}}
\end{equation}
(which follows readily by induction) and in particular
\begin{equation}\label{q:3tb}
   x_{n+1} + \mu y_{n+1} \le \frac{x_1 + \mu y_1}{(1+\delta)^n}
	+ \frac{\eps\,y_0}{(1+\delta)^{n-1}}
	+ \sum\nolimits_{j=1}^n \frac{r_j}{(1+\delta)^{n-j}}.
\end{equation}
In order to apply the bound \eqref{q:3tb} of \eqref{q:3tr} to \eqref{q:idl2},
we demand that $\Omega$, $\tq$ and $\sq$ be small enough that
\begin{equation}\label{q:qd}
   |\Omega|_{L^2}^2 \le \ppr^2/(32c_4),\qquad
   |\tq|_{L^2}^2 \le 1/(32c_4)
   \aand
   |\sq|_{L^2}^2 \le \pts^2/(32c_4).
\end{equation}
We note that, up to parameter-independent constants, these conditions
are identical to those in the continuous case \eqref{q:qc}.
Using the fact that $(1+x)^{-1} \le \exp(-x/2)$ for $x\in(0,1]$,
we integrate \eqref{q:idl2} to find a bound uniform in $t_n=nk$,
\begin{equation}\label{q:bdl2}\begin{aligned}
   \gn{\Uh^n,\Uh^{n+1}}_{\nu k}^2 &+ \ppr k\,|\gb\wh^{n+1}|^2\\
	&\le \ex^{-\nu nk/2}\bigl\{\gn{\Uh^0,\Uh^1}_{\nu k}^2 + \ppr k\,(|\gb\wh^0|^2 + |\gb\wh^1|^2)\bigr\}
	+ \frac2\nu\,\|F_2\|^2.
\end{aligned}\end{equation}
Using \eqref{q:equivn}--\eqref{q:c+-}, \eqref{q:l2} follows.

The hypothesis \eqref{q:whalf} can now be recovered by interpolation,
\begin{equation}\label{q:whalf1}\begin{aligned}
   |\wh^n|_{H^{1/2}}^2 &\le c\,|\wh^n|\,|\gb\wh^n|
	\le c\,\gn{\Uh^{n-1},\Uh^n}_{\nu k}^{}|\gb\wh^n|\\
	&\le c\,(\ppr k)^{-1/2}\bigl\{ \gn{\Uh^0,\Uh^1}_{\nu k}^2 + \ppr\,(|\gb\wh^0|^2 + |\gb\wh^1|^2) + 2\,\|F_2\|^2/\nu\bigr\}
\end{aligned}\end{equation}
and replacing $\gn{\Uh^0,\Uh^1}_{\nu k}^2$ by its sup over $\nu k\in(0,1]$.
Summing \eqref{q:idl2} and using \eqref{q:qd}, we find (discarding
terms on the lhs)
\begin{equation}\label{q:bdl2h1}\begin{aligned}
	k \sum\nolimits_{j=n+1}^{n+m} &\Bigl\{ \frac\ppr2\,|\gb\wh^j|^2 + 8\cpoi\,|\gb\Th^j|^2 + \frac{8\cpoi}\pts\,|\gb\Sh^j|^2\Bigr\}\\
	&\le \gn{\Uh^{n-1},\Uh^n}_{\nu k}^2 + 2k\,\|F_1\|^2(|\gb\wh^{n-1}|^2 + |\gb\wh^n|^2) + mk\,\|F_2\|^2.
\end{aligned}\end{equation}
From \eqref{q:bdl2} and \eqref{q:bdl2h1}, it is clear that there exists
a $t_0(|\gb U^0|,|\gb U^1|,Q;\pi)$ such that, whenever $nk\ge t_0$,
\begin{equation}\label{q:bdl2u}
   |\Uh^n|^2 \le M_0(Q;\pi)
   \aand
   k\,\nlsum_{j=n}^{n+\nk{1/k}}\,|\gb \Uh^j|^2 \le \mt_0(Q;\pi).
\end{equation}
We redefine $M_0$ and $\mt_0$ to bound $|U^n|^2$ and $\sum_j|\gb U^j|^2$
as well.

\medskip
On to $H^1$, we multiply (\ref{q:eih}a) by $-2k\Delta\wh^{n+1}$ in $L^2$ to get
\begin{equation}\begin{aligned}
   \qquad&\hskip-2em\gn{\gb\wh^n,\gb\wh^{n+1}}_{\nu k}^2 - \nu k\,|\gb\wh^{n+1}|^2
	+ \frac{|(1+\nu k)\gb\wh^{n+1}-2\gb\wh^n+\gb\wh^{n-1}|^2}{2\,(1+\nu k)}\\
	&= \frac{\gn{\gb\wh^{n-1},\gb\wh^n}_{\nu k}^2}{1+\nu k}
	- 2\ppr k\,|\Delta\wh^{n+1}|^2\\
	&- 2k\,(\dy(2\psi^n-\psi^{n-1},\gb\wh^{n+1}),(1+\nu k)\gb\wh^{n+1}-2\gb\wh^n+\gb\wh^{n-1})\\
	&- 2k\,(\dy(2\gb\psih^n-\gb\psih^{n-1},2\wh^n-\wh^{n-1}),\gb\wh^{n+1})\\
	&- 2k\,(\dy(\gb\Psi,2\wh^n-\wh^{n-1}),\gb\wh^{n+1})
	+ 2k\,(\dy(2\psi^n-\psi^{n-1},\Omega),\Delta\wh^{n+1})\\
	&+ 2\ppr k\,(\dy_xS^{n+1} - \dy_xT^{n+1} - \Delta\Omega,\Delta\wh^{n+1})
\end{aligned}\end{equation}
Labelling the ``nonlinear'' terms by \circled{1}, $\cdots$,\circled{4},
we bound them as
\begin{align*}
   \circled{1} &\le ck\,|2\gb\psi^n\!-\gb\psi^{n-1}|_{L^\infty}^{}|\gb^2\wh^{n+1}|_{L^2}^{}|(1+\nu k)\gb\wh^{n+1}\!-2\gb\wh^n\!+\gb\wh^{n-1}|_{L^2}^{}\\
	&\le \frac{\ppr k}8|\Delta\wh^{n+1}|^2
	+ \frac{c_3k^{1/2}}{4\ppr}\bigl(\mw + |\gb\Psi|_{L^\infty}^2\bigr)|\gb((1+\nu k)\wh^{n+1}\!-2\wh^n\!+\wh^{n-1})|^2\\
   \circled{2} &\le ck\,|2\wh^n-\wh^{n-1}|_{L^4}^{}|\gb^2\wh^{n+1}|_{L^2}^{}|2\wh^n-\wh^{n-1}|_{L^4}^{}\\
	&\le \frac{\ppr k}8\,|\Delta\wh^{n+1}|^2 + \frac{ck}\ppr\,|2\wh^n-\wh^{n-1}|^2|2\gb\wh^n-\gb\wh^{n-1}|^2\\
   \circled{3} &\le ck\,|\Omega|_{L^\infty}^{}|\gb^2\wh^{n+1}|_{L^2}^{}|2\wh^n-\wh^{n-1}|_{L^2}^{}\\
	&\le \frac{\ppr k}8\,|\Delta\wh^{n+1}|^2 + \frac{ck}\ppr\,|\Omega|_{L^\infty}^2|2\wh^n-\wh^{n-1}|^2\\
   \circled{4} &\le ck\,|2\gb\psi^n-\gb\psi^{n-1}|_{L^\infty}^{}|\gb\Omega|_{L^2}^{}|\Delta\wh^{n+1}|_{L^2}^{}\\
	&\le \frac{\ppr k}8\,|\Delta\wh^{n+1}|^2 + \frac{ck}\ppr\,|\gb\Omega|^2\bigl(|\gb\Psi|_{L^\infty}^2 + |2\gb\wh^n-\gb\wh^{n-1}|^2\bigr).
\end{align*}
Bounding the linear term in the obvious fashion and again using
\eqref{q:nu}--\eqref{q:k1}, we arrive at
\begin{equation}\label{q:idh1w}\begin{aligned}
   &\hskip-2em\gn{\gb\wh^n,\gb\wh^{n+1}}_{\nu k}^2 + \ppr k\,|\Delta\wh^{n+1}|^2\\
	&\le \gn{\gb\wh^{n-1},\gb\wh^n}_{\nu k}^2\bigl[1 \!+ {c\ppr^{-1}k}\,(M_0 \!+ |\gb\Omega|^2)\bigr]
	+ 8\ppr k\,\bigl(|\gb\Th^{n+1}|^2 \!+ |\gb\Sh^{n+1}|^2\bigr)\\
	&\quad+ {c\ppr^{-1}k}\,\bigl(M_0 |\Omega|_{L^\infty}^2 + |\gb\Omega|^2|\gb\Psi|_{L^\infty}^2\bigr)
	+ 8\ppr k\,\bigl(|\Delta\Omega|^2 + |\gb\tq|^2 + |\gb\sq|^2\bigr)
\end{aligned}\end{equation}
valid for large times $nk\ge t_0$.

Noting that, for $x_n\ge0$, $r_n\ge0$ and $b>0$,
\begin{equation}\label{q:gw1}
   x_{n+1} \le (1+b)\,x_n + r_n
   \qquad\Rightarrow\qquad
   x_{n+m} \le (1+b)^m\bigl(x_n + \tssum_{j=n}^{n+m-1}\,r_j\bigr),
\end{equation}
we can obtain a uniform $H^1$ bound from \eqref{q:bdl2u} and
\eqref{q:idh1w} as follows.
Borrowing an argument from \cite{coti-tone:12}, we conclude from
\eqref{q:bdl2u} that there exists an
$n_*\in\{n+\nk{1/k},\cdots,n+\nk{2/k}-1\}$
such that
\begin{equation}\label{q:ah100}
   |\gb\wh^{n_*}|^2 + |\gb\wh^{n_*+1}|^2 \le c\,\mt_0(Q;\parm)
	\>\Rightarrow\>
   \gn{\gb\wh^{n_*},\gb\wh^{n_*+1}}_{\nu k}^2 \le c_5\,\mt_0.
\end{equation}
(In other words, in any sequence of non-negative numbers, one can find
two consecutive terms whose sum is no greater than four times the average.)
Taking $n_*\in\{\Nk{t_0/k},\cdots,\Nk{(t_0+1)/k}-1\}$ and integrating
\eqref{q:idh1w} using \eqref{q:gw1} with $m=\nk{2/k}$ and \eqref{q:bdl2u}
to bound the $|\gb\Th^n|^2$ and $|\gb\Sh^n|^2$ on the rhs, we find
\begin{equation}\label{q:bdh1w}
   \gn{\gb\wh^n,\gb\wh^{n+1}}_{\nu k}^2 \le M_1(Q;\pi)
\end{equation}
for all $n\in\{n_*,\cdots,n_*+\nk{2/k}-1\}$.
We then find a $n_{**}\in\{n_*+\nk{1/k},\cdots,n_*+\nk{2/k}-1\}$ that satisfies
\eqref{q:ah100} and repeat the argument to find that \eqref{q:bdh1w} also
holds for all $n\in\{n_{**},\cdots,n_{**}+\nk{2/k}-1\}$.
Since $n_{**}\ge n_*+\nk{1/k}$, with each iteration we increase the time of
validity of \eqref{q:bdh1w} by at least 1 using no further assumptions,
implying that \eqref{q:bdh1w} in fact holds for all $n\ge n_*$,
i.e.\ whenever $nk\ge t_0+1$.

Similarly for $\Sh^n$, we multiply (\ref{q:eih}c) by $-2k\Delta\Sh^{n+1}$
in $L^2$ to find after a similar computation
\begin{equation}\label{q:idh1s}\begin{aligned}
   \gn{\gb\Sh^n,\gb\Sh^{n+1}}_{\nu k}^2 &+ \pts k\,|\Delta\Sh^{n+1}|^2
	\le \gn{\gb\Sh^{n-1},\Sh^n}_{\nu k}^2 \bigl(1 + ck\pts^{-1}M_0\bigr)\\
	&+ \frac{ck}\pts\,(M_0 + |\gb\sq|^2)\bigl(|\gb\Psi|_{L^\infty}^2 + |\gb\wh^{n-1}|^2 + |\gb\wh^n|^2\bigr)\\
	&+ \frac{ck}\pts\,M_0|\Omega|_{L^\infty}^2
	+ 8\pts k\,|\Delta\sq|^2.
\end{aligned}\end{equation}
Arguing as we did with $\wh^n$, we conclude that (redefining $M_1$ as needed)
one has
\begin{equation}
   \gn{\gb\Sh^n,\gb\Sh^{n+1}}_{\nu k}^2 \le M_1(Q;\pi)
	\qquad\textrm{whenever } nk \ge t_0+1.
\end{equation}
Obviously the same bound applies to $\Th^n$,
\begin{equation}
   \gn{\gb\Th^n,\gb\Th^{n+1}}_{\nu k}^2 \le M_1(Q;\pi)
	\qquad\textrm{whenever } nk \ge t_0+1.
\end{equation}
As with $M_0$, we redefine $M_1$ to bound
$\gn{\gb\omega^n,\gb\omega^{n+1}}_{\nu k}^2$, etc., as well as
$\gn{\gb\wh^n,\gb\wh^{n+1}}_{\nu k}^2$.
\end{proof}


\hbox to\hsize{\qquad\hrulefill\qquad}\medskip

\begin{proof}[Proof of Theorem~\ref{t:h2}]
Let $\delta U^n:=U^n-U^{n-1}=\hat U^n-\hat U^{n-1}$.
We first prove that $|\delta U^n|^2\le kM$ for all large $n$,
and then use this result to prove \eqref{q:bddU}.

\medskip
Writing $3\omega^{n+1}-4\omega^n+\omega^{n-1}=3\delta\omega^{n+1}-\delta\omega^n$
and using the identity
\begin{equation}\label{q:idd1}
  2\,(3\delta\omega^{n+1}-\delta\omega^n,\delta\omega^{n+1})
	= 3\,|\delta\omega^{n+1}|^2 - \sfrac13\,|\delta\omega^n|^2 + \sfrac13\,|3\delta\omega^{n+1}-\delta\omega^n|^2,
\end{equation}
we multiply (\ref{q:ei}a) by $4k\delta\omega^{n+1}$,
\begin{equation}\begin{aligned}
   3\,|\delta\omega^{n+1}|^2 &+ \sfrac13\,|3\delta\omega^{n+1}-\delta\omega^n|^2
	= \sfrac13\,|\delta\omega^n|^2\\
	&+ 4\ppr k\,(\Delta\omega^{n+1},\delta\omega^{n+1})
	+ 4\ppr k\,(\dy_x T^{n+1}-\dy_x S^{n+1},\delta\omega^{n+1})\\
	&- 4k\,(\dy(2\psi^n-\psi^{n-1},2\omega^n-\omega^{n-1}),\delta\omega^{n+1}).
\end{aligned}\end{equation}
For the dissipative term, we integrate by parts using the fact that
$\delta\omega^{n+1}=0$ on the boundary to write it as
\begin{equation}\label{q:idd2}
   -2\,(\Delta\omega^{n+1},\delta\omega^{n+1})
	= |\gb\omega^{n+1}|^2 - |\gb\omega^n|^2 + |\gb\delta\omega^{n+1}|^2.
\end{equation}
We bound the nonlinear term as
\begin{equation}\begin{aligned}
   4\,\bigl|(\dy(2\psi^n-\psi^{n-1},2&\omega^n-\omega^{n-1}),\delta\omega^{n+1})\bigr|\\
	&\le c\,|2\gb\psi^n-\gb\psi^{n-1}|_{L^\infty}^{}|2\gb\omega^n-\gb\omega^{n-1}|_{L^2}^{}|\delta\omega^{n+1}|_{L^2}^{}\\
	&\le \sfrac18\,|\delta\omega^{n+1}|^2 + c\,|2\gb\omega^n-\gb\omega^{n-1}|^4.
\end{aligned}\end{equation}
Bounding the buoyancy terms by Cauchy--Schwarz, we arrive at
\begin{equation}\begin{aligned}
   2\,|\delta\omega^{n+1}|^2 &+ \sfrac13\,|3\delta\omega^{n+1}-\delta\omega^n|^2
	+ 2\ppr k\,|\gb\omega^{n+1}|^2 + 2\ppr k\,|\gb\delta\omega^{n+1}|^2\\
	&\le \sfrac13\,|\delta\omega^n|^2 + 2\ppr k\,|\gb\omega^n|^2
	+ ck^2\,|2\gb\omega^n-\gb\omega^{n-1}|^4\\
	&\qquad+ c\ppr^2k^2\,\bigl(|\dy_x T^{n+1}|^2 + |\dy_x S^{n+1}|^2\bigr)\\
	&\le \sfrac13\,|\delta\omega^n|^2 + c(\parm)\bigl(kM_1 + k^2M_1^2\bigr).
\end{aligned}\end{equation}
It is now clear that, since $\delta\omega^1$ is bounded in $L^2$,
we have for large $nk$
\begin{equation}
   |\delta\omega^n|^2 \le k\,c(\parm)(M_1 + kM_1^2).
\end{equation}

Similarly for $\Sh^n$, we multiply (\ref{q:eih}c) by $4k\delta\Sh^{n+1}$
to find
\begin{equation}\begin{aligned}
   3\,|\delta\Sh^{n+1}|^2 &+ \sfrac13\,|3\delta\Sh^{n+1}-\delta\Sh^n|^2
	= \sfrac13\,|\delta\Sh^n|^2 + 4k\pts\,(\Delta\Sh^{n+1}\!+\Delta\sq,\delta\Sh^{n+1})\\
&- 4k\,(\dy(2\psi^n\!-\psi^{n-1},2\Sh^n\!-\Sh^{n-1}\!+\sq),\delta\Sh^{n+1}).
\end{aligned}\end{equation}
Bounding the nonlinear term as we did for $\omega^n$,
\begin{equation}\begin{aligned}
   &4\,\bigl|(\dy(2\psi^n\!-\psi^{n-1},2\Sh^n\!-\Sh^{n-1}\!+\sq),\delta\Sh^{n+1})\bigr|\\
	&\qquad\le \sfrac18\,|\delta\Sh^{n+1}|^2 + c\,|2\gb\omega^n\!-\gb\omega^{n-1}|^2(|2\gb\Sh^n\!-\gb\Sh^{n-1}|^2 + |\gb\sq|^2),
\end{aligned}\end{equation}
and the linear terms as we did with $\omega^n$, we arrive at
\begin{align}
   \!\!\!2\,&|\delta\Sh^{n+1}|^2 + \sfrac13\,|3\delta\Sh^{n+1}-\delta\Sh^n|^2
	+ 2\pts k\,|\gb\Sh^{n+1}|^2 + 2\pts k\,|\gb\delta\Sh^{n+1}|^2 \notag\\
	&\quad\le \sfrac13\,|\delta\Sh^n|^2 + 2\pts k\,|\gb\Sh^n|^2
	+ ck^2 |2\gb U^n\!-\gb U^{n-1}|^4 + c(\pts)k^2 (|\gb\sq|^4 + |\Delta\sq|^2),
\end{align}
whence
\begin{equation}
   |\delta\Sh^n|^2 \le k\,c(\parm)(M_1 + kM_1^2)
	\qquad\textrm{for large }nk.
\end{equation}
Obviously a similar bound holds for $\delta\Th^n$, so we conclude that
\begin{equation}\label{q:delU}
   |\delta U^n|^2 \le k\,c(\parm)(M_1 + kM_1^2) =: k\mt_\delta
	\qquad\textrm{for large }nk.
\end{equation}

\medskip
By taking difference of (\ref{q:ei}a), we find
\begin{equation}\label{q:del0}\begin{aligned}
   &\frac{3\delta\omega^{n+1}-4\delta\omega^n+\delta\omega^{n-1}}{2k}
	+ \dy(2\psi^{n-1}-\psi^{n-2},2\delta\omega^n-\delta\omega^{n-1})\\
	&\quad+ \dy(2\delta\psi^n-\delta\psi^{n-1},2\omega^n-\omega^{n-1})
	= \ppr\bigl\{\Delta\delta\omega^{n+1} + \dy_x\delta T^{n+1} - \dy_x\delta S^{n+1}\bigr\}.
\end{aligned}\end{equation}
Multiplying this by $2k\delta\omega^{n+1}$ and using \eqref{q:hs00},
we have
\begin{equation}\begin{aligned}
   &\gn{\delta\omega^n,\delta\omega^{n+1}}_{\nu k}^2 - \nu k\,|\delta\omega^{n+1}|^2 + \frac{|(1+\nu k)\delta\omega^{n+1}\!-2\delta\omega^n+\delta\omega^{n-1}|^2}{2(1+\nu k)} + kI\\
	&\quad= \frac{\gn{\delta\omega^{n-1},\delta\omega^n}_{\nu k}^2}{1+\nu k}
	- 2\ppr k\,|\gb\delta\omega^{n+1}|^2
	+ 2\ppr k\,(\dy_x\delta T^{n+1}\!-\dy_x\delta S^{n+1},\delta\omega^{n+1}).
\end{aligned}\end{equation}
Here $I=I_1+I_2$ denotes the nonlinear terms, which we bound as
\begin{equation}\begin{aligned}
   |I_1| &\le c\,|2\gb\psi^{n-1}\!-\gb\psi^{n-2}|_{L^\infty}^{}|\gb\delta\omega^{n+1}|_{L^2}^{}|2\delta\omega^n\!-\delta\omega^{n-1}|_{L^2}^{}\\
	&\le \frac{\ppr}8\,|\gb\delta\omega^{n+1}|^2 + \frac{c}\ppr\,|2\gb\omega^{n-1}-\gb\omega^{n-2}|^2|2\delta\omega^n-\delta\omega^{n-1}|^2\\
   |I_2| &\le c\,|2\gb\delta\psi^n-\gb\delta\psi^{n-1}|_{L^4}^{}|2\omega^n-\omega^{n-1}|_{L^4}^{}|\gb\delta\omega^{n+1}|_{L^2}^{}\\
	&\le \frac{\ppr}8\,|\gb\delta\omega^{n+1}|^2 + \frac{c}\ppr\,|2\delta\omega^n-\delta\omega^{n-1}|^2|2\gb\omega^n-\gb\omega^{n-1}|^2.
\end{aligned}\end{equation}
Bounding the linear terms as
\begin{equation}
   \bigl|(\dy_x\delta T^{n+1}\!-\dy_x\delta S^{n+1},\delta\omega^{n+1})\bigr|
	\le \sfrac14\,|\gb\delta\omega^{n+1}|^2 + 2\,|\delta T^{n+1}|^2 + 2\,|\delta S^{n+1}|^2
\end{equation}
and using \eqref{q:delU}, we obtain
\begin{equation}\label{q:del1}\begin{aligned}
   \gn{\delta\omega^n,\delta\omega^{n+1}}_{\nu k}^2 &+ \ppr k\,|\gb\delta\omega^{n+1}|^2\\
	&\le \frac1{1+\nu k}\gn{\delta\omega^{n-1},\delta\omega^n}_{\nu k}^2
	+ k^2c(\parm)\tilde M_\delta(1+M_1).
\end{aligned}\end{equation}
Integrating this and the analogous expressions for $\delta T^n$
and $\delta S^n$, we obtain \eqref{q:bddU} for $nk$ large.

\medskip
To prove \eqref{q:h2}, we note that (\ref{q:ei}b) implies
\begin{equation}\begin{aligned}
   |\Delta T^{n+1}| &\le |\dy(2\psi^n-\psi^{n-1},2T^n-T^{n-1})|
	+ \frac{|3\delta T^{n+1} - \delta T^n|}{2k}\\
	&\le c\,|2\gb\omega^n-\gb\omega^{n-1}|\,|2\gb T^n-\gb T^{n-1}|
	+ \frac{3|\delta T^{n+1}| + |\delta T^n|}{2k}.
\end{aligned}\end{equation}
Since the right-hand side has been bounded (independently of $k$ for the
first term and by $Mk$ for the second) on the
attractor $\Attr_k$, it follows that $|\Delta T^n|$ is uniformly bounded
on $\Attr_k$ as well.
Clearly similar $H^2$ bounds also hold for $S^n$ and $\omega^n$,
proving \eqref{q:h2} and the Theorem.
\end{proof}


\appendix
\section{2d Navier--Stokes equations}

In this appendix we present an alternate derivation of the boundedness
results in \cite{wxm:12}, without using the Wente-type estimate of
\cite{kim:09} but requiring slightly more regular initial data.
In principle these could be obtained following the proofs of
Theorems \ref{t:h1}~and~\ref{t:h2} above,
but the computation is much cleaner in this case (mostly due to the
periodic boundary conditions) so we present it separately.

The system is the 2d Navier--Stokes equations
\begin{equation}\label{q:adwdt}
   \frac{3\omega^{n+1}-4\omega^n+\omega^{n-1}}{2k}
	+ \dy(2\psi^n-\psi^{n-1},2\omega^n-\omega^{n-1})
	= \mu\Delta\omega^{n+1} + f^n
\end{equation}
with periodic boundary conditions.
It is clear that $\omega^n$ has zero integral over $\Dom$,
and we define $\psi^n$ uniquely by the zero-integral condition.
These imply \eqref{q:cpoi}--\eqref{q:normeq},
which we will use below without further mention.
Assuming that the initial data $\omega^0$, $\omega^1\in H^{1/2}$ (in fact,
we only need $H^\epsilon$ for any $\epsilon>0$, but will write $H^{1/2}$ for
concreteness), we derive uniform bounds for $\omega^n$ in $L^2$, $H^1$ and $H^2$.

Assuming for now the uniform bound
\begin{equation}\label{q:ahalf}
   |\omega^n|_{H^{1/2}}^2 \le k^{-1/2} M_\omega(\cdots)
   \qquad\textrm{for }n\in\{2,3,\cdots\},
\end{equation}
we multiply \eqref{q:adwdt} by $2k\omega^{n+1}$ in $L^2$, use \eqref{q:hs00}
and estimate as before,
\begin{equation}\begin{aligned}
   &\hskip-20pt\gn{\omega^n,\omega^{n+1}}_{\nu k}^2 - \nu k\,|\omega^{n+1}|^2
	+ 2\mu k\,|\gb\omega^{n+1}|^2 + \frac{|(1+\nu k)\omega^{n+1}\!-2\omega^n\!+\omega^{n-1}|^2}{2(1+\nu k)}\\
   &= \frac{\gn{\omega^{n-1},\omega^n}_{\nu k}^2}{1+\nu k} + 2k\,(f^n,\omega^{n+1})\\
	&\qquad- 2k\,(\dy(2\psi^n-\psi^{n-1},\omega^{n+1}),(1+\nu k)\omega^{n+1}-2\omega^n+\omega^{n-1})\\
   &\le \frac{\gn{\omega^{n-1},\omega^n}_{\nu k}^2}{1+\nu k}
	+ \frac{\mu k}2\,|\gb\omega^{n+1}|^2 + \frac{ck}\mu\,|f^n|_{H^{-1}}^2\\
	&\qquad+ \frac{ck}\mu\,|2\gb\psi^n-\gb\psi^{n-1}|_{L^\infty}^2|(1+\nu k)\omega^{n+1}-2\omega^n+\omega^{n-1}|^2,
\end{aligned}\end{equation}
giving (as before, we require $k\le 1/\nu$)
\begin{equation}\begin{aligned}
   \gn{\omega^n,\omega^{n+1}}_{\nu k}^2 &- \nu k\,|\omega^{n+1}|^2
	+ \frac{3\mu k}2\,|\gb\omega^{n+1}|^2
   \le \frac{\gn{\omega^{n-1},\omega^n}_{\nu k}^2}{1+\nu k}
	+ \frac{ck}\mu\,|f^n|_{H^{-1}}^2\\
	&+ |(1+\nu k)\omega^{n+1}\!-2\omega^n\!+\omega^{n-1}|^2
		\bigl({c_3k^{1/2}M_\omega}/\mu - \sfrac14\bigl).
\end{aligned}\end{equation}
Setting $\nu=\mu/(2\cpoi)$ and imposing the timestep restriction
\begin{equation}\label{q:adt}
   k \le k_0 := \min\{\mu^2/(4c_3 M_\omega)^2,1/\nu\},
\end{equation}
this gives
\begin{equation}\label{q:aidl2}
   \gn{\omega^n,\omega^{n+1}}_{\nu k}^2  + \mu k\,|\gb\omega^{n+1}|^2
	\le \frac{\gn{\omega^{n-1},\omega^n}_{\nu k}^2}{1+\nu k}
	+ \frac{ck}\mu\,|f^n|_{H^{-1}}^2.
\end{equation}
Integrating using the Gronwall lemma, we arrive at the $L^2$ bound
\begin{equation}\label{q:abdl2}\begin{aligned}
   \gn{\omega^{n+1},\omega^{n+2}}_{\nu k}^2 &+ \mu k\,|\gb\omega^{n+2}|^2
	\le \ex^{-\nu nk/2}\gn{\omega^0,\omega^1}_{\nu k}^2
		+ \frac{c}{\mu^2}\,\tssup_j|f^j|_{H^{-1}}^2\\
	&\le \gn{\omega^0,\omega^1}_{\nu k}^2
		+ \frac{c}{\mu^2}\,\tssup_j|f^j|_{H^{-1}}^2 =: M_0.
\end{aligned}\end{equation}
The hypothesis \eqref{q:ahalf} is now recovered by interpolation as before,
\begin{equation}\begin{aligned}
   |\omega^n|_{H^{1/2}}^2 &\le c\,|\omega^n|\,|\gb\omega^n|
	\le c\,\gn{\omega^{n-1},\omega^n}_{\nu k}^{}|\gb\omega^n|\\
	&\le c\,(\mu k)^{-1/2} \bigl(\gn{\omega^0,\omega^1}_{\nu k}^2 + (1/\mu+1/\mu^2)\,\tssup_j |f^j|_{H^{-1}}^2\bigr).
\end{aligned}\end{equation}
Summing \eqref{q:aidl2}, we find
\begin{equation}\label{q:abdl2h1}
   \mu k\,\tssum_{j=n+1}^{n+\nk{1/k}} |\gb\omega^j|^2 \le \gn{\omega^{n-1},\omega^n}_{\nu k}^2
	+ c_\mu\,\tssup_j |f^j|_{H^{-1}}^2.
\end{equation}
It is clear that both bounds \eqref{q:abdl2} and \eqref{q:abdl2h1}
can be made independent of the initial data for sufficiently large
time, $nk\ge t_0(\omega^0,\omega^1;f,\mu)$.

For the $H^1$ estimate, we multiply \eqref{q:adwdt} by $-2k\Delta\omega^{n+1}$
in $L^2$ and use \eqref{q:hs00}.
Writing the nonlinear term as
\begin{equation}\begin{aligned}
   \!N_1 := (\dy(2\psi^n&-\psi^{n-1},2\omega^n-\omega^{n-1}),\Delta\omega^{n+1})\\
	&= (\dy(2\gb\psi^n-\gb\psi^{n-1},\gb\omega^{n+1}),2\omega^n-\omega^{n-1})\\
	&- (\dy(2\psi^n-\psi^{n-1},\gb\omega^{n+1}),\gb((1+\nu k)\omega^{n+1}-2\omega^n+\omega^{n-1}))
\end{aligned}\end{equation}
and bounding the terms as
\begin{equation}\begin{aligned}
   \hskip-8pt|N_1| &\le c\,|2\omega^n-\omega^{n-1}|_{L^4}^{}|\gb^2\omega^{n+1}|_{L^2}^{}|2\omega^n-\omega^{n-1}|_{L^4}^{}\\
	&\quad+ c\,|2\gb\psi^n\!-\gb\psi^{n-1}|_{L^\infty}^{}|\gb^2\omega^{n+1}|_{L^2}^{}|\gb((1+\nu k)\omega^{n+1}\!-2\omega^n\!+\omega^{n-1})|_{L^2}^{}\\
	&\le \frac\mu2\,|\Delta\omega^{n+1}|^2 + \frac{c}\mu\,|2\omega^n-\omega^{n-1}|^2|2\gb\omega^n-\gb\omega^{n-1}|^2\\
	&\quad+ \frac{ck^{-1/2}}\mu\,M_\omega\,|\gb((1+\nu k)\omega^{n+1}\!-2\omega^n\!+\omega^{n-1})|^2,
\end{aligned}\end{equation}
we find the differential inequality, using the bound \eqref{q:abdl2},
\begin{equation}\label{q:aidh1}\begin{aligned}
   &\gn{\gb\omega^n,\gb\omega^{n+1}}_{\nu k}^2 + \mu k\,|\Delta\omega^{n+1}|^2
	\le \gn{\gb\omega^{n-1},\gb\omega^n}_{\nu k}^2 \bigl(1 + ck\,M_0/\mu\bigr)\\
	&\qquad+ |\gb((1+\nu k)\omega^{n+1}\!-2\omega^n\!+\omega^{n-1})|^2 \bigl({c_3k^{1/2}M_\omega}/\mu - \sfrac14\bigr)
	+ ck\,|f^n|^2/\mu.
\end{aligned}\end{equation}
Using the earlier timestep restriction \eqref{q:adt}, we can suppress the
second term on the r.h.s.
Thanks to \eqref{q:abdl2h1}, for any $n\in\{0,1,\cdots\}$ we can find
$n_*\in\{n,\cdots,n+\nk{1/k}\}$ such that
$\gn{\gb\omega^{n_*},\gb\omega^{n_*+1}}_{\nu k}^2\le c(\mu)\,\bigl(\gn{\omega^0,\omega^1}_{\nu k}^2 + \sup_j|f^j|_{H^{-1}}^2\bigr)$.
Arguing as before, we can use this to integrate \eqref{q:aidh1} to give us
a uniform $H^1$ bound
\begin{equation}\label{q:abdh1}
   \gn{\gb\omega^n,\gb\omega^{n+1}}_{\nu k}^2 \le M_1(|\gb\omega^0|,|\gb\omega^1|;\mu,\tssup_j|f^j|)
\end{equation}
valid for all $n\in\{0,1,\cdots\}$.
Moreover, $M_1$ can be made independent of the initial data
$|\gb\omega^0|$, $|\gb\omega^1|$ for sufficiently large $n$;
in fact, we do not even need $\omega^0$, $\omega^1\in H^1$,
although we still need them to be in $H^\epsilon$ for the timestep
restriction~\eqref{q:adt}.
Summing \eqref{q:aidh1} and using \eqref{q:abdh1}, we find
\begin{equation}
   \mu k\,\tssum_{j=n+1}^{n+\nk{1/k}} |\Delta\omega^j|^2
	\le \mt_1(\tssup_j|f^j|;\mu)
	\qquad\textrm{for all }nk\ge t_1(\omega^0,\omega^1,f;\mu).
\end{equation}

Similarly, for the $H^2$ estimate, we multiply \eqref{q:adwdt} by
$2k\Delta^2\omega^{n+1}$ in $L^2$ and write the nonlinear term as
\begin{equation}\begin{aligned}
   N_2 := (\dy(2\psi^n&-\psi^{n-1},2\omega^n-\omega^{n-1}),\Delta^2\omega^{n+1})\\
	&= -(\dy(2\gb\psi^n-\gb\psi^{n-1},2\omega^n-\omega^{n-1}),\gb\Delta\omega^{n+1})\\
	&\quad- (\dy(2\psi^n-\psi^{n-1},2\gb\omega^n-\gb\omega^{n-1}),\gb\Delta\omega^{n+1}).
\end{aligned}\end{equation}
Bounding this as
\begin{equation}\begin{aligned}
   |N_2| &\le c\,|2\omega^n-\omega^{n-1}|_{L^\infty}^{}|2\gb\omega^n-\gb\omega^{n-1}|_{L^2}^{}|\gb\Delta\omega^{n+1}|_{L^2}^{}\\
	&\quad+ c\,|2\gb\psi^n-\gb\psi^{n-1}|_{L^\infty}^{}|2\gb^2\omega^n-\gb^2\omega^{n-1}|_{L^2}^{}|\gb\Delta\omega^{n+1}|_{L^2}^{}\\
	&\le \frac\mu2\,|\gb\Delta\omega^{n+1}|^2 + \frac{c}\mu\,|2\gb\omega^n-\gb\omega^{n-1}|^2\gn{\Delta\omega^{n-1},\Delta\omega^n}_{\nu k}^2,
\end{aligned}\end{equation}
we arrive at the differential inequality
\begin{equation}\begin{aligned}
   \gn{\Delta\omega^n,\Delta\omega^{n+1}}_{\nu k}^2
	&+ \mu k\,|\gb\Delta\omega^{n+1}|^2\\
	&\le \gn{\Delta\omega^{n-1},\Delta\omega^n}_{\nu k}^2\bigl(1 + ck M_1/\mu\bigr)
	+ ck |\gb f^n|^2/\mu.
\end{aligned}\end{equation}
As with \eqref{q:aidh1}, this can be integrated to obtain the uniform bound
\begin{equation}\label{q:aidh2}
   \gn{\Delta\omega^n,\Delta\omega^{n+1}}_{\nu k}^2
	\le M_2(\tssup_j|\gb f^j|;\mu)
\end{equation}
valid whenever $nk\ge t_2(\omega^0,\omega^1,f;\mu)$.

To bound the difference $\delta\omega^n:=\omega^n-\omega^{n-1}$,
we write \eqref{q:adwdt} as
\begin{equation}
   \frac{3\delta\omega^{n+1}-\delta\omega^n}{2k}
	+ \dy(2\psi^n-\psi^{n-1},2\omega^n-\omega^{n-1})
	= \mu\Delta\omega^{n+1} + f^n.
\end{equation}
Multiplying by $4k\delta\omega^{n+1}$ and using \eqref{q:idd1}
and \eqref{q:idd2}, we find
\begin{equation}\begin{aligned}
   3|\delta\omega^{n+1}|^2 &+ \sfrac13|\delta\omega^{n+1}-\delta\omega^n|^2
	= \sfrac13|\delta\omega^n|^2\\
	&+ 2\mu k|\gb\omega^n|^2
	- 2\mu k|\gb\omega^{n+1}|^2 - 2\mu k|\gb\delta\omega^{n+1}|^2\\
	&- 4k(\dy(2\psi^n-\psi^{n-1},2\omega^n-\omega^{n-1}),\delta\omega^{n+1})
	+ 4k(f^n,\delta\omega^{n+1}).
\end{aligned}\end{equation}
Bounding the nonlinear term and suppressing harmless terms, we arrive at
\begin{equation}\begin{aligned}
   2|\delta\omega^{n+1}|^2 &\le \sfrac13|\delta\omega^n|^2
	+ 2\mu k|\gb\omega^n|^2\\
	&\quad+ ck^2|2\gb\psi^n-\gb\psi^{n-1}|_{L^\infty}^2|2\gb\omega^n-\gb\omega^{n-1}|^2
	+ \frac{ck^2}\mu|f^n|_{H^{-1}}^2.
\end{aligned}\end{equation}
Since the r.h.s.\ has been bounded uniformly for large $nk$, we conclude that
\begin{equation}
   |\delta\omega^n|^2 \le k \hat M_0(f,\mu)
\end{equation}
for $nk$ sufficiently large.
Arguing as in \eqref{q:del0}--\eqref{q:del1}, we can improve the bound
on $|\delta\omega^n|$ to ${\sf O}(k)$.



\nocite{foias-manley-temam:87b}
\nocite{hsia-ma-wang:08}

\nocite{balmforth-al:06}
\nocite{radko-stern:00}


\end{document}